\documentclass[10pt,leqno,twoside]{amsart}

\usepackage{tikz, subfigure, xcolor} 
\usetikzlibrary{snakes}
\usepackage{amsmath}
\usepackage{amsthm}
\usepackage{amssymb}
\usepackage{amsfonts}
\usepackage{cite}
\usepackage{dsfont}
\usepackage{hyperref}
\usepackage{inputenc}
\usepackage[normalem]{ulem}
\usepackage{doi}

\setlength{\textwidth}{16cm}
\setlength{\textheight}{21cm}
\hoffset=-55pt

\newtheorem{theorem}{Theorem}[section]
\newtheorem{lemma}[theorem]{Lemma}
\newtheorem{proposition}[theorem]{Proposition}

\theoremstyle{definition}

\newtheorem{remark}[theorem]{Remark}

\numberwithin{equation}{section}

\newcommand{\supp}{\mathrm{supp}}
 %closure
 % Beweisende
 %span

\renewcommand{\div}{\mathrm{div}\,} %div

\providecommand{\norm}[1]{\lVert#1\rVert} %norm
 %absolut value

\DeclareMathOperator{\Ran}{Ran}

\DeclareMathOperator{\tr}{tr}

 %differential
 %dx
 %dx

%mathematische Schriften

\newcommand{\R}{\mathbb{R}}

\newcommand{\Z}{\mathbb{Z}}
\newcommand{\N}{\mathbb{N}}

%%%%%%%%%%%%%%%%%%%%%%%%%%%%%%%%
%Kaliegraphie
%%%%%%%%%%%%%%%%%%%%%%%%%%%%%%%%

\newcommand{\cL}{{\mathcal L}}

%Doppelt definieret symbole

\usepackage{eucal}

\newcommand\restr[2]{{
  \left.\kern-\nulldelimiterspace
  #1 % the function
  \vphantom{\big|}
  \right|_{#2} %this is the delimiter
}}
\newcommand{\lpso}{L^p_{\overline{\sigma}}(\Omega)}

\title[Primitive Equations with Linearly Growing Initial
Data]{Primitive Equations with Linearly Growing Initial
Data}

\date{}

\subjclass[2010]{Primary: 35Q35;
Secondary: 76D03, 47D06, 35K65, 86A05.}
%35Q35: Other equations arising in fluid mechanics
%76Dxx: Incompressible viscous fluids
%76D03: Existence, uniqueness, and regularity theory
%47D06: One-parameter semigroups and linear evolution equations
%35K65: Parabolic partial differential equations of degenerate type
%86-xx: Geophysics 
%86A05: Hydrology, hydrography, oceanography

\keywords{primitive equations, mild solutions,
	rotating flows\\
	This work was partly supported by the DFG International Research Training Group IRTG 1529 and the JSPS Japanese-German Graduate Externship on Mathematical Fluid Dynamics. 
	The first author has been supported by IRTG 1529 at TU Darmstadt.
	}

\author{Amru Hussein}
\address{Departement of Mathematics,
TU Darmstadt, Schlossgartenstr.~7,
64289 Darmstadt, Germany}
\email{hussein@mathematik.tu-darmstadt.de}

\author{Martin Saal}
\address{Departement of Mathematics,
TU Darmstadt, Schlossgartenstr.~7,
64289 Darmstadt, Germany}
\email{msaal@mathematik.tu-darmstadt.de}

\author{Okihiro Sawada}
\address{Applied Physics Course,
Gifu University, Yanagido~1-1,
501-1193 Gifu, Japan}
\email{okihiro@gifu-u.ac.jp}

\begin{document}

\begin{abstract}
The primitive equations in a $3D$ infinite layer domain
are considered with linearly growing initial data in the
horizontal direction, which illustrates the global
atmospheric rotating or straining flows.
On the boundaries, Dirichlet, Neumann or mixed boundary
conditions are imposed.
The Ornstein-Uhlenbeck type operator appears in the linear
parts, so the semigroup theory is established by Trotter's
arguments due to decomposition of infinitesimal generators.
To obtain smoothing properties of the semigroup,
derivatives of the associated kernel are calculated.
For proving time-local existence and uniqueness of mild
solutions, the adapted Fujita-Kato scheme is used in
certain Sobolev spaces.
\end{abstract}

\maketitle

%%%%%%%%%%%%%%%%%%%%%%%%%%%%%%%%%%%%%%%%%%%%%%%%%%%%%%%%%%%

\section{Introduction}
The primitive equations for the ocean and atmosphere are
considered to be a fundamental model for geophysical flows,
which is derived from the Navier-Stokes or Boussinesq
equations assuming a hydrostatic balance.
Mathematical analysis of the primitive equations has been
commenced by Lions, Teman and Wang
\cite{Lionsetal1992, Lionsetal1992_b, Lionsetal1993}.
For more information on previous works on the primitive
equations, we refer to the articles of Washington and
Parkinson \cite{WashingtonParkinson1986}, Pedlosky
\cite{Pedlosky1987}, Majda \cite{Majda2003} and Vallis
\cite{Vallis2006};
see also the survey by Li and Titi \cite{LiTiti2016} for
recent results and further references.

The $3D$ primitive equations are derived from the
Navier-Stokes equations in domains which are small in the
vertical direction compared to the horizontal ones.
This justifies the assumption of a hydrostatic balance in
the vertical direction.
Although the nonlinear structure of the primitive
equations looks similar to that of the Navier-Stokes
equations, it 
differs due to anisotropic features.
As the Navier-Stokes equations, the primitive equations
describe the velocity $U$ of a fluid and the
pressure $\Pi$.
Putting $U = (V, W)$, where $V = (V_1, V_2)$ denotes the
horizontal components and $W$ stands for the vertical one,
it leads us to
\begin{align*}
\left\{
\begin{array}{rll}
  \partial_t V + V \cdot \nabla_H V + W \cdot \partial_z V
  - \Delta V + \nabla_H \Pi & = f, & \text{ in } \Omega
  \times (0, T),\\
  \Pi_z & =0, &\text{ in } \Omega \times (0, T), \\
  \mathrm{div}_H V + W_z & = 0, & \text{ in } \Omega \times
  (0, T), \\
  V(0) & = V_0, & \text{ in } \Omega.
\end{array}\right.
\end{align*}
Here $\Omega := \R^2 \times (-h, h) \subset \R^3$
for $h > 0$ is an infinite layer domain;
$\nabla_H$ and $\mathrm{div}_H$ denote the horizontal
gradient and the horizontal divergence, respectively;
$\Delta$ stands for the full Laplacian.
It is remarkable that the time-global well-posedness for
the $3D$ primitive equations has been proven by Cao and
Titi \cite{CaoTiti2007} with initial data in $H^1$, while
the question of time-global well-posedness for the
Navier-Stokes equations still constitutes an open problem.
So, in the study of the primitive equations
-- especially the study of admissible initial values --
a legitimate first step would be to ask whether results
known for the Navier-Stokes equations hold also for the
primitive equations.

For the Navier-Stokes equations in the whole space $\R^d$
for $d \geq 2$, Hieber and the third author of this paper
studied in \cite{HieberSawada2005} the particular case of
linearly growing initial data.
Concretely saying, the initial velocity $V_0$ is given by
the form $V_0(x) = v_0(x) - M x$ for $x \in \R^d$,
where $M$ is a constant and trace free matrix and
$v_0 \in L_{\sigma}^p(\R^d)$ for $p \geq d$.
For such an initial velocity the Navier-Stokes equations
are indeed time-local well-posed in the mild sense.
Such initial data occur in several applications.
Typical examples of $M$ with $d=3$ in
\cite{HieberSawada2005, HieberRhandiSawada2007} are
$M = R$, $J$, $S$ and their respective sums with
\begin{align*}
  R = \left(
  \begin{array}{ccc}
    0 & -a & 0 \\ a & 0 & 0 \\ 0 & 0 & 0
  \end{array}
  \right), \quad J = \left(
  \begin{array}{ccc}
    -b & 0 & 0 \\ 0 & -b & 0 \\ 0 & 0 & 2b
  \end{array}
  \right) \quad \text{and} \quad S = \left(
  \begin{array}{ccc}
    c & 0 & 0 \\ 0 & -c & 0 \\ 0 & 0 & 0
  \end{array}
  \right)
\end{align*}
with $a, b, c \in \R$.
Here the matrix $R$ describes a swirl by the Coriolis
force;
$J$ models a drain along the horizontal axis and a jet 
flow in the vertical direction;
$S$ illustrates a model of a straining flow on the surface.
For precise analysis, the reader can find for the case of
pure rotation in e.g. \cite{BabinMahalovNicolaenko2001,
Hishida1999}, for $M=J$ or $S$ in e.g.
\cite{GallayMaekawa2011, Majda1986} and the references
therein. In \cite{CampitiGaldiHieber2014} the global existence 
of strong solutions in the case of a two dimensional exterior domain and a traceless matrix $M$ was 
established for $L^2$-initial data of arbitrary size and 
in \cite{HanShaoWangXu} a weaker version of that 
result was shown recently in the $L^p$-setting ($p\geq 2$) considering only skew-symmetric matrices $M$.

Considering the primitive equations, the setting in the
whole space does not make sense.
However, 
a layer domain $\Omega$ %imposed
imposing Dirichlet,
Neumann or mixed boundary conditions constitutes an
admissible setting 
%New 
for the hydrostatic balance assumption.
The investigation in the $L^p$ framework for the primitive
equations in a cylinder $(0, 1)^2 \times (-h, 0) \subset
\R^3$ with some boundary conditions has been started by
Hieber and his collaborators in
\cite{GigaGriesHusseinHieberKashiwabara2016,
HieberHusseinKashiwabara2016, HieberKashiwabara2015}.
They treated 
initial data 
in $H^{2/p, p}$ for
$p \in (1, \infty)$
which carries over to decaying 
initial data 
in $H^{2/p, p}$ on the layer domain.
However, linearly growing data  $V_0(x) = v_0(x) - M x$ 
for $x \in \R^2$ have not been considered,
so far. As an physical application one can think of
\begin{align*}
  M = \left(
  \begin{array}{cc}
    0 & 0 \\  c & 0 
  \end{array}
  \right),
\end{align*}
which produces a horizontal shear flow in the fluid layer. \\
Similar to the case of the Navier-Stokes equations in
the whole space, with $2 \times 2$ matrix $M$ the
substitution
\begin{align*}
  V = v - M x_H, \quad v \in L^p(\Omega)^2, \quad x_H =
  (x, y) \in \R^2
\end{align*}
derives equations involving 
an
Ornstein-Uhlenbeck type
operator in the linear parts, whenever the boundary
conditions are adopted in 
a certain sense.
This operator contains linearly growing coefficients in
the drift terms.
Ornstein-Uhlenbeck type operator generates 
$(C_0)$
semigroup in $L^p$ for $p \in (1, \infty)$, however, this
semigroup is not analytic, in general.
So, it is \textit{a priori} not clear whether it has suitable smoothing
properties or not. 
The handling of the boundary conditions and the linearly
growing coefficients in the drift terms are main technical
issues that one has to overcome for proving eventually the
existence of time-local unique mild solutions as well as
the treatment of the non-linearity.
Although the $3D$ primitive equations 
on bounded cylindrical domains 
are time-global well-posed, it is still open weather the same
results hold with linearly growing data 
on layer domains. In this paper we aim to prove the local in time 
well-posedness of the $3D$ primitive equations. The question of 
time-global solutions will be part of a future work.

Comparing a situation of the Navier-Stokes equations, the
main difficulties in our setting arise from a lack of
smoothing property in the linearized problem.
Although in the whole space the heat kernel is explicitly
given, 
the hydrostatic Stokes operator is
expressed as a perturbation of the Laplacian as shown in
\cite{GigaGriesHusseinHieberKashiwabara2016}.
Hence, even though the structure of non-linearity in the
primitive equations
resembles
that in the
Navier-Stokes equations, 
one has 
to assume some
additional regularity on initial data 
since for primitive equations the non-linearity which is actually 
bi-linear contains derivatives in both arguments.
Therefore, 
it is also more convenient to  
apply the iteration scheme of Fujita-Kato
\cite{FujitaKato1964} rather than that of Kato
\cite{Kato1984} for handling the non-linearity of the
primitive equations.

This paper is organized as follows.
In Section~\ref{sec:Preliminaries} we give preliminaries
for basic setting of function spaces and reformulation for
the problem with linearly growing initial data.
In Section~\ref{sec:layer} we discuss theories of the
Ornstein-Uhlenbeck type operator in an infinite layer
domain, proving smoothing properties of the corresponding
semigroup in Section~\ref{sec:smooth}.
In Section~\ref{sec:mild} the existence of time-local
unique mild solutions is proved.

%%%%%%%%%%%%%%%%%%%%%%%%%%%%%%%%%%%%%%%%%%%%%%%%%%%%%%%%%%%

\subsection*{Acknowledgment}
The authors would like to thank Matthias Hieber for helpful discussions 
on linearly growing data for Navier Stokes equations and Robert Haller-Dintelmann 
for the valuable discussion of Ornstein-Uhlenbeck operators in various domains, in 
particular pointing out the works \cite{HallerWiedl2005} and \cite{Wiedl2006}. 
%%%%%%%%%%%%%%%%%%%%%%%%%%%%%%%%%%%%%%%%%%%%%%%%%%%%%%%%%%%

\section{Preliminaries}\label{sec:Preliminaries}
We consider the $3D$ primitive equations in an infinite
layer domain
\begin{align*}
  \Omega := G \times (-h, h) \subset \R^3 \quad
  \text{with} \quad G := \R^2, \quad h > 0.
\end{align*}
Here the horizontal coordinates are denoted by $x, y \in G$ and
$x_H := (x, y)$, and the vertical one by $z \in (-h, h)$.
For the sake of simplicity, only the velocity and the surface
pressure are considered in this paper, omitting the
temperature, salinity and further quantities which are
incorporated into the full model discussed in
\cite{Lionsetal1992, Lionsetal1992_b, Lionsetal1993}.
The unknowns are the velocity $U$ of the fluid described as
$U = (V, W)$, where the vector $V = (V_1, V_2)$ denotes the horizontal
components and the scalar $W$ stands for of the vertical one,
and the surface pressure written as $\Pi_s$.
So, we consider the equations
\begin{align}\label{eq:primequiv}
  \left\{
  \begin{array}{rll}
    \partial_t V + V \cdot \nabla_H V + W \cdot \partial_z
    V - \Delta V + \nabla_H \Pi_s & = f & \text{ in }
    \Omega \times (0,T), \\
    \mathrm{div}_H \, \overline{V} & = 0 & \text{ in }
    \Omega \times (0,T), \\
    V(0) & = V_0 & \text{ in } \Omega.
  \end{array}\right.
\end{align}
Here $\Pi_s$ can be regarded as a function defined in
$\Omega \times (0, T)$ 
using
$\Pi_s(x, y, z, t) =
\Pi_s(x, y, t)$.
We have used the 
notations 
\begin{align*}
  & \Delta = \partial_x^2 + \partial_y^2+ \partial_z^2,
  \quad \nabla_H  = (\partial_x, \partial_y), \quad
  V \cdot \nabla_H = V_1 \partial_x + V_2 \partial_y, \\
  & \mathrm{div}_H \, V = \partial_x V_1 + \partial_y V_2
  \quad \text{and} \quad \overline{V} = \frac{1}{2 h}
  \int_{-h}^h V(\cdot,\cdot, \xi)d\xi.
\end{align*}

The system is supplemented by the mixed boundary
conditions on
\begin{align*}
  \Gamma_b := G \times \{ -h \} \quad \text{and} \quad
  \Gamma_u := G \times \{ h \}
\end{align*}
denoting 
%by 
the bottom and the upper parts of the boundary
$\partial \Omega$, respectively.
Here and hereafter, for justification reducing equations of
disturbance $\eqref{eq:V}$ in below, we impose the
adopted the boundary condition on Dirichlet boundary
condition parts along with linearly growing initial data:
\begin{align}\label{eq:bc}
  V = M x_H \hbox{ on } \Gamma_D \times (0, T), \quad
  \partial_z V = 0 \hbox{ on } \Gamma_N \times (0, T),
  \quad W = 0 \hbox{ on } \partial \Omega \times (0, T).
\end{align}
Here $M \in \R^{2 \times 2}$ is a given constant matrix
with ${\mathrm{tr}} M = 0$;
Dirichlet, Neumann and mixed boundary conditions are
comprised by the notation
\begin{align*}
  \Gamma_D \in \{ \emptyset, \Gamma_u, \Gamma_b,
  \Gamma_u \cup \Gamma_b \} \quad \text{and} \quad
  \Gamma_N := (\Gamma_u \cup \Gamma_b) \setminus \Gamma_D.
\end{align*}
In the  
literatures, several situation of boundary
conditions have also been considered.
For example, in
\cite[Equation (1.37) and (1.37)']{Lionsetal1992} Dirichlet
and mixed boundary conditions are treated respectively,
while in \cite{CaoTiti2007} Neumann boundary conditions are
assumed.

By the boundary conditions on $W$ and $\div U = 0$ 
the vertical component of
the velocity $W = W(V)$ is determined as
\begin{eqnarray*}
  W(x, y, z, t) = - \int_{-h}^z \mathrm{div}_H \,
  V (x, y, \xi, t) d\xi
\end{eqnarray*}
for each $(x, y, z, t) \in \Omega \times (0, T)$;
see e.g. \cite{HieberKashiwabara2015}.

Similarly to the Navier-Stokes equations, one can consider
the solenoidal subspace of $L^p(\Omega)^2$ for
$p \in (1, \infty)$.
The classical Helmholtz projection onto the solenoidal
space $L^p_\sigma(\R^2)$ for $p \in (1, \infty)$ 
is denoted
by $Q_p$ 
here. 
In addition, 
benefits from the approach developed in 
\cite[Section 3 and 4]{HieberKashiwabara2015}
carry over to the present situation.
So, considering 
\begin{align}\label{eq:lpso}
  L^p_{\overline{\sigma}}(\Omega) := \overline{\{v \in
  C^{\infty}(\Omega)^2 : \mathrm{div}_H \, \overline{v}
  = 0 \}}^{\norm{\cdot}_{L^p(\Omega)^2}},
\end{align}
there exists a continuous projection $P_p$, called %the
\textit{hydrostatic Helmholtz projection}, from
$L^p(\Omega)^2$ onto $L^p_{\overline{\sigma}}(\Omega)$
which
can be represented by
\begin{align*}
  P_p v = Q_p \overline{v} + \tilde{v},
  \quad \tilde{v} := v - \overline{v}.
\end{align*}
Note that $v = \overline{v}$ holds if and only if
$\tilde v = 0$,
compare also \cite{GigaGriesHusseinHieberKashiwabara2016}.
In particular
$P_p$ annihilates the gradient of the
surface pressure term $\nabla_H \pi_s$.
As in \cite{HieberKashiwabara2015}, we can define the
\textit{hydrostatic Stokes operator} $A_p$ in
$L^p_{\overline{\sigma}}(\Omega)$ by
\begin{align*}
  A_p v := P_p \Delta v, \quad D(A_p) :=  \{ v \in
  H^{2,p}(\Omega)^2 : \restr{\partial_z v}{\Gamma_N} = 0,
  \, \restr{v}{\Gamma_D} = 0 \} \cap
  L^p_{\overline{\sigma}}(\Omega).
\end{align*}

In what follows, we deal with the initial velocity $V_0$
of the form 
\begin{align*}
  V_0(x_H, z) := v_0(x_H, z) - M x_H \quad \hbox{for} \quad 
  x_H \in G, \quad z \in (-h ,h),
\end{align*}
where $v_0 \in L_{\overline{\sigma}}^p(\Omega)$ and
$M \in \R^{2 \times 2}$ with $\tr M = 0$, that is,
\begin{align*}
 M = \left( \begin{array}{cc}
   m_{11} & m_{12} \\ m_{21} & m_{22}
 \end{array} \right) \quad \text{with}
 \quad m_{11}= -m_{22}.
\end{align*} 
The condition $\tr M = 0$ 
implies the
compatibility condition:
\begin{align*}
  \mathrm{div}_H \, \overline{V_0} = 0 \quad \text{in}
  \quad \Omega.
\end{align*}

We now assume that $V$ solves the primitive equations with
initial data $V_0$.
Substituting 
\begin{align*}
  V(x_H, z, t) = v(x_H, z, t) - M x_H \quad \hbox{for} \,
  v(t) \in L_{\overline{\sigma}}^p(\Omega)
\end{align*}
yields for $v$
\begin{align*}
  \partial_t V = \partial_t v, \quad \Delta V = \Delta v,
  \quad \div_H \overline{V} = \div_H \overline{v} \quad
  \text{and} \quad W(V) \partial_z V = W(v) \partial_z v.
\end{align*}
For the last two identities, the assumption $\tr M = 0$
is essential.
Moreover,
\begin{align*}
  V \cdot \nabla_H V = v \cdot \nabla_H v - M x_H \cdot
  \nabla_H v - v \cdot \nabla_H M x_H + M x_H \cdot
  \nabla_H M x_H,
\end{align*}
and the individual terms are more explicitly given by
\begin{align*}
  v \cdot \nabla_H M x_H & = v_1 \begin{bmatrix} m_{11} \\
  m_{21} \end{bmatrix} + v_2 \begin{bmatrix} m_{12} \\
  m_{22} \end{bmatrix} = Mv, \\
  M x_H \cdot \nabla_H v & = (m_{11}x + m_{12}y) \partial_x
  v + (m_{21}x + m_{22}y) \partial_y v, \\
  M x_H \cdot \nabla_H M x_H & = (m_{11}x + m_{12}y)
  \begin{bmatrix} m_{11} \\ m_{21} \end{bmatrix} +
  (m_{21}x + m_{22}y) \begin{bmatrix} m_{12} \\ m_{22}
  \end{bmatrix}.
\end{align*}
Using the assumption $\tr M = 0$, the last quadratic term
simplifies to become
\begin{align*}
  M x_H \cdot \nabla_H M x_H = (m_{11}^2  + m_{12}m_{21})
  x_H = -(\det M) x_H.
\end{align*}
Hence, to absorb this term by the modified pressure, we
put $\tilde \pi_s = \Pi_s - g$ with
$g(x, y) := -\frac{1}{2} \det(M) (x^2+y^2)$.
So, if $(V, \Pi_s)$ solves \eqref{eq:primequiv}, then
$(v, \tilde \pi_s)$ solves the following equations in
$\Omega \times (0, T)$
\begin{align}\label{eq:V}
  \left\{
  \begin{array}{rll}
    \partial_t v + w \partial_z v - \Delta v + v \cdot
    \nabla_H v - M v - M x_H \cdot \nabla_H v + \nabla_H
    \tilde \pi_s & = f, \\
    \mathrm{div}_H \, \overline{v} & = 0
  \end{array}\right.
\end{align}
with initial conditions $v(0)=v_0$ which belongs to a
suitable subspace of $\lpso$.
Since $V(x_H, z) = v(x_H, z) - M x_H$, where $V$ satisfies
the boundary conditions \eqref{eq:bc} one obtains using
$\partial_z M x_H \mid_{\Gamma_N} = 0$, $\partial_z V
\mid_{\Gamma_N} = 0$ and $V \mid_{\Gamma_D} + M x_H
\mid_{\Gamma_D} = 0$ that $v$ satisfies the boundary
conditions
\begin{align}\label{eq:bcequiv}
  v = 0 \hbox{ on } \Gamma_D \times (0, T) \quad
  \hbox{and} \quad \partial_z v = 0  \hbox{ on }
  \Gamma_N \times (0, T).
\end{align}
So, solving system \eqref{eq:V} for a real matrix $M$ with
$\tr M=0$ and boundary conditions \eqref{eq:bcequiv} for
$v(\cdot, t) \in L_{\overline{\sigma}}^p(\Omega)$ and 
initial data $v_0 \in L_{\overline{\sigma}}^p(\Omega)$ is
equivalent to solving the original equations
\eqref{eq:primequiv} with boundary conditions \eqref{eq:bc}
and linearly growing initial data $V_0 = v_0 - Mx_H$.
The linear parts of \eqref{eq:V} are associated with an
operator of Ornstein-Uhlenbeck type which will be
investigated in the next section for general real matrices
$M$, that is not assuming necessarily $\tr M=0$.

The spectrum and the structure of $M$ are relevant for
computing the matrix exponential appearing in the
Kolmogorov kernel given in \eqref{eq:TH} below.
In fact, the eigenvalues of $M$ dominates the dynamics of
solutions to \eqref{eq:V}, directly.
The eigenvalues of $M \in \R^{2\times 2}$ with $\tr M = 0$
are explicitly given as
\begin{align*}
  \lambda_\pm = \pm \sqrt{m_{11}^2 + m_{12} \cdot m_{21}}.
\end{align*} 
Therefore, one can distinguish the following cases:
\begin{itemize}
  \item[(a)] If $m_{11}^2 + m_{12} \cdot m_{21} > 0$, then
  there are two different real eigenvalues, and $M$ is
  similar to a sign-indefinite symmetric matrix, and
  therefore $e^{tM}$ is not exponentially stable.
  \item[(b)] If $m_{11}^2 + m_{12} \cdot m_{21} < 0$, then
  there are two distinct and purely imaginary eigenvalues.
  Hence, $M$ is similar to a skew-symmetric matrix making
  $e^{tM}$ similar to a unitary group and
  $\norm{e^{tM}} < C$ for all $t \in \R$, where $C > 0$
  depends only on $M$.
  \item[(c)] If $m_{11}^2 + m_{12} \cdot m_{21} = 0$, then
  zero is an eigenvalue of algebraic multiplicity two.
  If $M \neq 0$ then $0$ is an eigenvalue of algebraic
  multiplicity one, and hence $M$ is similar to a Jordan
  block with zero on the diagonal, and hence $e^{tM}$ is
  similar to the matrix exponential $\left(
  \begin{array}{cc} 1 & t \\ 0 & 1 \end{array} \right)$.
\end{itemize}
In particular, $\| e^{tM} \| \leq C$ holds with some
$C > 0$ and all $t > 0$ if and only if $M = 0$ or case
$(\mathrm{b})$ holds.
Furthermore, we can take $C = 1$ if $M$ is anti-symmetric,
that is, the pure rotation case.

\begin{remark}
	To absorb the term $M x_H \cdot \nabla_H M x_H$ into the pressure, one needs that it is a gradient field. As described above this holds for traceless matrices. However, this is also true for instance for symmetric matrices, i.e., $m_{12}=m_{21}$, where
	\begin{align*}
	M x_H \cdot \nabla_H M x_H = \nabla \varphi, \quad \varphi(x,y)=
	\frac{1}{2}(m_{11}^2 + m_{12}^2)x^2 + \frac{1}{2}(m_{22}^2 + m_{12}^2)y^2 + m_{12} (m_{11}+ m_{22})xy.
	\end{align*}
\end{remark}

%%%%%%%%%%%%%%%%%%%%%%%%%%%%%%%%%%%%%%%%%%%%%%%%%%%%%%%%%%%

\section{Ornstein-Uhlenbeck operator in a layer}
\label{sec:layer}
We define the Ornstein-Uhlenbeck type operator
$\cL_p$ in $L^p(\Omega)^2$ by
\begin{align*}
  \cL_p v & := \Delta v - (\mathds{1}-P_p) \tr_D 
  (\partial_z v) + M x_H \cdot \nabla_H v - M v, \\
  D(\cL_p) & := \{ v \in H^{2,p}(\Omega)^2 : \partial_z u
  \mid_{\Gamma_N} = 0 \, \hbox{and} \, u \mid_{\Gamma_D}
  = 0, \, M x_H \cdot \nabla_H v \in L^p(\Omega)^2 \}.
\end{align*}
Here we have used
\begin{align*}
  \tr_D (\partial_z v) := \frac{1}{2h} \left( \alpha(b)
  \partial_z v|_{\Gamma_u} - \alpha(u) \partial_z
  v|_{\Gamma_b} \right)
\end{align*}
with $\alpha(a) = 1$ if $\Gamma_a \subset \Gamma_D$ 
for $a \in \{ u, b \}$, and $\alpha(a) = 0$ otherwise.
So, one can consider the following time-evolutionary
ordinary differential equation %in some abstract space
\begin{align*}
  \partial_t v - \cL_p v = f\quad \hbox{for} \quad t>0 
  \quad \hbox{and} \quad v(0) = v_0.
\end{align*}
In fact, by Proposition~\ref{prop:restrictedsemigroup}
below, this problem is well-defined in
$L_{\overline{\sigma}}^p(\Omega)$.
In previous results on %a 
the bounded cylindrical domain case,
the correction terms $-(\mathds{1}-P_p) \tr_D (\partial_z
v)$ have also been discussed.
In \cite[Section 4]{GigaGriesHusseinHieberKashiwabara2016}
the key idea is to solve the surface pressure terms,
firstly.
This method 
carries over to the case in an infinite layer domain.

To apply known results for Ornstein-Uhlenbeck operators in
the whole space, we decompose $\cL_p$ into the horizontal
parts $\cL_H$ and the vertical one $\cL_z$ as
\begin{align*}
  \cL_H v := \Delta_H v + M x_H \cdot \nabla_H v - M v
  \quad \hbox{and} \quad \cL_z v := \partial_z^2 v -
  (\mathds{1}-P_p) \tr_D (\partial_z v) 
\end{align*}
using anisotropic Sobolev spaces as the domains
\begin{align*}
  D(\cL_H) & := \{ v \in L_z^p H_{xy}^{2, p} : M x_H
  \cdot \nabla_H v \in L^p(\Omega) \}, \\
  D(\cL_z) & := \{ v \in H_{z}^{2, p} L_{xy}^p :
  \partial_z v|_{\Gamma_N} = 0, \, v|_{\Gamma_D} = 0 \}.
\end{align*}
Here, for $r, s \geq 0$ and $p, q \in (1, \infty)$ we
have used the spaces
\begin{eqnarray*}
  H_z^{r, q} H^{s, p}_{xy} := H^{r, q}((-h, h) ;
  H^{s, p}(G))
\end{eqnarray*}
equipped with the norm $\| v \|_{H_z^{r, q} H^{s, p}_{xy}}
:= \| \|v(\cdot, z)\|_{H^{s, p}(G)} \|_{H^{r, q}(-h, h)}$
setting for brevity $H^{0, p} = L^p$. 
Note that 
\begin{align}\label{eq:anisotropicinclusion}
  H^{r+s, p}(\Omega) \subset H_z^{r, p} H^{s, p}_{xy}.
\end{align}
Note that the boundary conditions of $\cL_z$ are
well-defined since the trace on $\Gamma_u$ and $\Gamma_b$
is well-defined in the anisotropic spaces with regularity
in the vertical direction.
Indeed, by Sobolev's embedding we see that
\begin{align*}
  |v(x, y, \pm h)| \leq C \| v(x, y, \cdot) \|_{H^1_z}
  \quad \text{and} \quad |\partial_z v(x, y, \pm h)| \leq
  C \| v(x, y, \cdot) \|_{H^2_z}
\end{align*}
for almost every $x, y \in G$.
Taking $L^p_{xy}$ norms into above, we thus have
\begin{align*}
  \| v(\cdot, \cdot, \pm h) \|_{L^p_{xy}} \leq C
  \| v \|_{H^{1,p}_z L^p_{xy}} \quad \text{and} \quad
  \| \partial_z v(\cdot,\cdot,\pm h) \|_{L^p_{xy}} \leq
  C \| v \|_{H^{2, p}_z L^p_{xy}}.
\end{align*}

The operator $\cL_H$ in $L^p(\R^2)^2$ has been studied in
\cite{HieberSawada2005} drawing back its main properties
to the classical Ornstein-Uhlenbeck operator defined by
$(\cL_H + M)v$ in $L^p(\R^2)^2$ studied extensively in
e.g. \cite{HallerWiedl2005, Metafune2001,
MetafunePruessetall2002, Wiedl2006}.
Consider as in \cite[Lemma 3.3]{HieberSawada2005} for
$L^p(\R^2)^2$, the semigroup $T_H(t) = e^{t \cL_H}$ in
$L^p(\Omega)^2$ for $p \in (1, \infty)$ defined by
\begin{align}\label{eq:TH}
  (T_H(t) \psi)(x_H, z) &
  := \frac{1}{4 \pi (\det Q_t)^{1/2}} e^{-tM} \int_{\R^2}
  \psi(e^{tM} x_H - x'_H, z) e^{-\frac{1}{4} \langle
  Q_t^{-1} x_H', x_H' \rangle} dx_H'
\end{align}
for $t > 0$ and $(x_H, z) \in \Omega$, where $\psi \in L^p
(\Omega)^2$ and $Q_t := \int_0^t e^{s M} e^{s M^T} ds$.
Define the associated kernel $k_t$ by $k_t(x_H) :=
\frac{1}{4\pi (\det Q_t)^{1/2}} e^{-tM} e^{-\tfrac{1}{4}
\langle Q_t^{-1} x_H, x_H \rangle}$, so we can write
(\ref{eq:TH}) as
\begin{align*}
  (T_H(t) \psi)(x_H, z) = (k_t \ast_H \psi)(e^{tM} x_H, z)
  \quad \text{for} \quad t > 0,
\end{align*}
where $\ast_H$ denotes by the convolution
with respect to the $x_H$ variables.

Note that there is a constant $C > 0$ independent of $t$
such that
\begin{align*}
  t^2 \leq \det Q_t \quad \hbox{and} \quad \| Q_t^{-1} \|
  \leq C t^{-1} \quad \text{for} \quad t > 0,
\end{align*}
see \cite[Equation~(3.5)]{HieberSawada2005}.
Let us consider the more general $d$-dimensional case.
Since the matrix $M+M^T$ is symmetric, we can write it as
$A D A^{-1}$ with a diagonal matrix $D$ and a regular
matrix $A$.
Hence, we obtain
\begin{align*}
  \det Q_t = \det \int_0^t e^{s A D A^{-1}} ds
  = \det (A \int_0^t e^{s D} ds A^{-1})
  = \det \int_0^t e^{sD} ds
  = t^k \prod_{i = 1}^{d-k}
  \frac{e^{\lambda_i t}-1}{\lambda_i}.
\end{align*}
Here $k$ is the multiplicity of the eigenvalue 0, and the
$\lambda_i \in \R \setminus \{ 0 \}$ are 
non-zero
eigenvalues.
With $e^{\lambda_i t}-1 \geq \lambda_i t$ for $t > 0$,
we 
obtain $t^d \leq \det Q_t$.
Similarly,
\begin{align*}
  Q_t^{-1} = \left(A \int_0^t e^{s D} ds A^{-1}\right)^{-1}
  = A \left( \begin{array}{ccc}
    t & \cdots & 0 \\ \vdots & \ddots & \vdots \\ 0 &
    \cdots & \frac{e^{t\lambda_{d-k}}-1 }{\lambda_{d-k}}
  \end{array} \right)^{-1} A^{-1}.
\end{align*}
It follows from $e^{\lambda_i t}-1 \geq \lambda_i t$ again
that 
\begin{align*}
  \| Q_t^{-1} \| \leq \|A\| \|A^{-1}\|
  \max_{1 \leq i \leq d-k} \{ t^{-1},
  \frac{\lambda_{i}}{e^{t\lambda_{i}}-1 } \}
  \leq \|A\| \|A^{-1} \| \cdot t^{-1}.
\end{align*}

The operator $\cL^0_z$ defined by 
\begin{align*}
  \cL^0_z \psi = \partial_z^2 \psi, \quad
  D(\cL^0_z) = D(\cL_z)
\end{align*}
is certainly the generator of a bounded analytic semigroup
in $L^p(\Omega)^2$.
In $\R^2 \times S^1$ with $S^1 := \{ z \in [-h,h] : -h
\sim h \}$, a semigroup can be defined explicitly by means
of Fourier series 
\begin{align}\label{eq:Tz}
  (T^0_{z, h}(t) \psi)(x_H,z) := \sum_{k \in \Z} e^{-tk^2}
  \hat{\psi}_k(x_H) e^{ik\frac{\pi}{h}z}
\end{align}
for $t > 0$ and $(x_H, z) \in \Omega$, where
\begin{align}\label{eq:FourierSeries}
  \hat{\psi}_k(x_H) := \frac{1}{2h} \int_{-h}^h
  e^{-ik\frac{\pi}{h}z} \psi(x_H, z) dz
\end{align}
for $t > 0$ and $(x_H, z) \in \Omega$.
We may arrive at the Dirichlet, Neumann and mixed
boundary conditions, taking odd parts of $T^0_{z, 2h}$,
even parts of $T^0_{z, 2h}$, and odd and even part of
$T^0_{z,4h}$, respectively.

We treat $\cL_z = \cL_z^0 + B_p$, which can be regarded as
a relatively bounded perturbation from $\cL^0_z$ by
\begin{align*}
  B_p \colon H_{z}^{1+\frac{1}{p}, p}L_{xy}^p(\Omega)^2 
  \rightarrow L^p(\Omega)^2, \quad
  B_p v := (\mathds{1}-P_p)(\tr_D \partial_z v).
\end{align*}
One sees that the 
operator $B_p$ is bounded by the trace 
%theory
theorem
applied to the vertical direction;
see e.g. \cite[Theorem~2.7.2]{Triebel} for half spaces,
which carries over to the situation considered here
localizing functions around $\Gamma_u$ and $\Gamma_b$ by
cut-off functions, 
and
taking $L^p_{xy}$ norm.
In particular, $C^1(-h, h) \hookrightarrow
H_{z}^{1+\frac{1}{p}, p}$ for $p \in (1, \infty)$.
Also, $D(\cL_z^0) \subset D(B_p)$ holds.
Therefore, by interpolation inequality and
$H_z^{1+\frac{1}{p}, p} = [L_z^{p}, H_z^{2,p}]_{\frac{1}{2}
+\frac{1}{2p}}$, where $[\cdot, \cdot]_{\theta}$ denotes
the complex interpolation functor,
and
by Young's inequality 
\begin{align*}
  \| B_p v \|_{L^p(\Omega)^2} & \leq C \| v \|_{H_z^{1 +
  \frac{1}{p}, p}L_{xy}^p} \\
  & \leq C \| v \|_{L_z^p L_{xy}^p}^{\frac{1}{2} -
  \frac{1}{2p}} \| v \|_{H_z^{2, p} L_{xy}^p}^{\frac{1}{2}
  + \frac{1}{2p}} \\
  & \leq \varepsilon \, C \| v \|_{H_z^{2, p} L^p_{xy}}
  + \frac{C}{\varepsilon} \| v \|_{L^p(\Omega)^2}
\end{align*}
for $v \in D(\cL_z^0)$ and $\varepsilon >0$, where
$C > 0$ is some constant depending only on $p$ and $h$.
Therefore, $\cL_z$ with $D(\cL_z) = D(\cL_z^0)$ is the
generator of an analytic semigroup $T_z(t) = e^{t \cL_z}$
in $L^p(\Omega)$ for $p \in (1, \infty)$ by e.g.
\cite[Theorem~3.2.1]{Pazy}.
Note that for the case of $\Gamma_D = \emptyset$, i.e.
$\Gamma_N = \Gamma_b \cup \Gamma_u$, we see that
$\cL_z = \cL^0_z$ and the semigroup can be given explicitly
as even parts of $T^0_{z, 2h}$.

\begin{lemma}\label{lemma:commuting}\
\begin{itemize}
  \item[(a)] The operators $T_H(t)$ and $T_z(t)$ define
  $(C_0)$-semigroups in $L^p(\Omega)^2$ for
  $p \in (1, \infty)$ the infinitesimal generators of
  which are $\cL_H$ and $\cL_z$, respectively.
  \item[(b)] They commute, that is,
  \begin{align*}
    T_z(s) T_H(t) = T_H(t) T_z(s) \quad \text{for} \quad
    s, t > 0.
  \end{align*}
  \item[(c)] $T_H(t)$ and $T_z(t)$ restrict to
  $(C_0)$-semigroups in 
  the complementary spaces 
  $L_{\overline{\sigma}}^p(\Omega)$
  and $\nabla_H (W^{1, p}(\R^2))$.
\end{itemize}
\end{lemma}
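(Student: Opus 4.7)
The strategy is to prove the three parts of the lemma in order, exploiting that $T_H(t)$ and $T_z(t)$ essentially act on disjoint spatial directions apart from the projection correction appearing in $\cL_z$. The main obstacle I anticipate is the compatibility of $T_H$ with the horizontal Helmholtz projection, which enters both in (b) and in (c).

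For part~(a), the claim for $T_z(t)$ is already contained in the perturbation argument $\cL_z = \cL^0_z + B_p$ established in the paragraphs preceding the lemma. For $T_H(t)$, I would verify the semigroup identity $T_H(t+s) = T_H(t)T_H(s)$ directly from \eqref{eq:TH} using the Chapman--Kolmogorov relation for the kernel $k_t$ established in \cite{HieberSawada2005}, and derive strong continuity from the fact that $k_t$ is an approximate identity as $t \to 0^+$ together with $e^{tM} \to \mathds{1}$. Since $T_H(t)$ acts only in the horizontal variables, with $z$ entering as a parameter, the $L^p(\R^2)^2$ statement from Hieber--Sawada transfers to $L^p(\Omega)^2 = L^p_z(L^p_{xy})^2$ by Fubini. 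Differentiating \eqref{eq:TH} at $t = 0$ on a suitable dense subspace identifies the generator with $\cL_H$.

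For part~(b), the plan is to verify $[\cL_H, \cL_z] = 0$ on a dense core of tensor-product functions $\phi(x_H)\psi(z)$ adapted to the boundary conditions, and transfer this to commutativity of the semigroups through their resolvents. The pure horizontal operator $\cL_H$ commutes trivially with $\cL^0_z = \partial_z^2$ since they act on disjoint variables, and likewise with the trace $\tr_D\partial_z$, which is purely vertical. The delicate point is commutation with $(\mathds{1} - P_p)$: decomposing $P_p = Q_p\overline{\,\cdot\,} + (\mathds{1} - \overline{\,\cdot\,})$, the vertical averaging commutes with $\cL_H$, leaving the key step that the horizontal Helmholtz projection $Q_p$ on $L^p(\R^2)^2$ commutes with $\cL_H$; this in turn reduces to the invariance of both subspaces of the Helmholtz decomposition under $T_H$ and uses $\tr M = 0$.

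For part~(c), the decomposition $L^p(\Omega)^2 = L^p_{\overline{\sigma}}(\Omega) \oplus \nabla_H W^{1,p}(\R^2)$ is precisely $\ran(P_p) \oplus \Ker(P_p)$. Invariance of $L^p_{\overline{\sigma}}(\Omega)$ under $T_H(t)$ reduces, via $\overline{T_H v} = T_H\overline{v}$, to preservation of $L^p_\sigma(\R^2)$ by the planar Ornstein--Uhlenbeck semigroup, which follows from $\tr M = 0$ and a direct divergence computation under the change of variables $x_H \mapsto e^{tM}x_H$. Invariance of $L^p_{\overline{\sigma}}(\Omega)$ under $T_z(t)$ is by the design of the correction $B_p$: for $v \in D(\cL_z)$, the Dirichlet/Neumann boundary conditions imply $\overline{\partial_z^2 v} = \tr_D\partial_z v$, whence $(\mathds{1} - P_p)\cL_z v = 0$. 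For $\nabla_H W^{1,p}(\R^2)$ embedded as $z$-independent fields, $T_z(t)$ acts as the identity and the restriction is trivial, while invariance under $T_H(t)$ follows from preservation of $z$-independence by the kernel formula combined with the Helmholtz compatibility established in part~(b). Strong continuity on each restricted subspace is then inherited from the full semigroup on $L^p(\Omega)^2$.
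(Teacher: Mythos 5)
Your parts (a) and (c) track the paper's argument closely: for (a) the paper likewise takes the $T_z$ claim from the perturbation $\cL_z=\cL_z^0+B_p$ and imports the $T_H$ claim from the planar Ornstein--Uhlenbeck theory (citing Metafune and Metafune--Pr\"uss--Rhandi--Schnaubelt), and for (c) the paper proves invariance of $L^p_{\overline{\sigma}}(\Omega)$ under $T_H$ by exactly the divergence computation you describe ($\mathrm{div}_H$ of the kernel formula reduces, via cyclicity of the trace, to the scalar Kolmogorov semigroup applied to $\mathrm{div}_H\overline{v}$ --- note this step actually needs no hypothesis on $\tr M$, and the section is set up for general $M$), and invariance under $T_z$ by showing $(\lambda-\cL_z)$ maps the two complementary subspaces onto themselves. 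One caution for (a): differentiating the kernel at $t=0$ on a dense subspace only identifies the generator on a core; matching the \emph{full} domain $D(\cL_H)$, including the condition $Mx_H\cdot\nabla_H v\in L^p$, is the nontrivial content of the cited domain characterization and should not be waved through.

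The genuine soft spot is the pivot of part (b). Verifying $[\cL_H,\cL_z]=0$ on a dense core of tensor products does \emph{not} by itself yield commutativity of the semigroups or of the resolvents: two unbounded generators can commute on a common core while their semigroups fail to commute, so "transfer this through their resolvents" is precisely the step that needs an argument rather than a remark. What the paper actually proves is the resolvent commutation \eqref{eq:commuting_resolvents}, namely that $(\cL_H-\lambda)^{-1}$ maps $D(\cL_z)$ into $D(\cL_z)$ and commutes with $\cL_z$ there; this is obtained from the Laplace-transform representation $(\cL_H-\lambda)^{-1}=\int_0^\infty e^{-\lambda t}T_H(t)\,dt$ together with the identity $T_H(t)\partial_z^2 v=\partial_z^2 T_H(t)v$ (valid because $T_H$ acts only horizontally and $D(\cL_z)=D(\cL_z^0)$), plus the commutation of $T_H(t)$ with $\tr_D$ and with $\mathds{1}-P_p$ coming from the Helmholtz invariance you correctly isolate. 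Only then does Trotter's approximation formula give $T_H(t)T_z(s)=T_z(s)T_H(t)$. So your identification of the delicate ingredient (compatibility of $T_H$ with $\mathds{1}-P_p$) is right, but the logical scaffolding must run through domain-preservation by the resolvent of $\cL_H$, not through commutation of the generators on a core.
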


\begin{proof}
The operator $\cL_z$ is by construction the infinitesimal
generator of $T_z(t)$.
For $\cL_H$, one can verify that the proof of the
corresponding statement on $L^p(\R^2)$ for the generator;
see \cite[Proposition 3.2]{Metafune2001} as well as the
characterization of the domain
\cite[Theorem 4.1]{MetafunePruessetall2002}
which carry over one-to-one to the present situation, and 
one
therefore concludes that $\cL_H$ is indeed the generator
of $T_H$.

In the case of $\Gamma_D = \emptyset$, both semigroups
$T_H(t)$ and $T_z(t)$ are given explicitly by
\eqref{eq:TH} and \eqref{eq:Tz}, respectively.
So, it is straight forward 
to verify directly that they commute
interchanging the order of the integration with respect to
$dx_H$ and $dz$.

Considering the case $\Gamma_D \neq \emptyset$, it suffices
to show that the 
resolvents of
$\cL_H$ and $\cL_z$ commute by
Trotter's approximation formula \cite[Theorem~3.4.4]{Pazy}.
We now prove that their resolvents commute or, equivalently
$\rho(\cL_H) \cap \rho(\cL_z) \neq \emptyset$, that is,
for $\lambda \in \rho(\cL_H)$
\begin{align}\label{eq:commuting_resolvents}
  (\cL_H -\lambda)^{-1} D(\cL_z) \subset D(\cL_z) \quad
  \hbox{and} \quad (\cL_H -\lambda)^{-1} \cL_z v = \cL_z
  (\cL_H -\lambda)^{-1} v
\end{align}
for $v \in D(\cL_z)$;
see e.g. \cite[Section~4.2]{Arendt2004}.
Here, the resolvent of $\cL_H$ can be given explicitly by
the Laplace transform 
\begin{align}\label{eq:LapalceTransform}
  (\cL_H -\lambda)^{-1} v = \int_0^{\infty} e^{-\lambda t}
  T_H(t) v dt \quad \text{for} \quad \lambda > 0,
\end{align}
since $\cL_H$ generates a quasi-contractive
$(C_0)$-semigroup in $L^p(\Omega)^2$.
In \cite[Lemma~3.1]{Metafune2001} $\cL_H^{\prime}v = \cL_H v
+ Mx_H$ is considered, 
which is a given as integral kernel only in $x_H$ direction,
we thus have
\begin{align*}
  T_H(t) \partial_z^2 v = \partial_z^2 T_H(t) v,
  \quad v \in D(\cL_z), \quad t \geq 0,
\end{align*}
since by the arguments of a relatively bounded perturbation 
one has
$D(\cL_z) = D(\cL^0_z)$.
Using \eqref{eq:LapalceTransform}, we see
\begin{align*}
  (\cL_H -\lambda)^{-1} \partial_z^2 v = \partial_z^2
  (\cL_H  -\lambda)^{-1} v \in L^{p}(\Omega)^2 \quad
  \text{for} \quad v \in D(\cL_z).
\end{align*}
Therefore, it holds that
\begin{align*}
  (\cL_H -\lambda)^{-1} D(\cL_z) \subset D(\cL_z).
\end{align*}

We calculate further the horizontal derivatives of
$v(e^{tM} x_H)$.
To shorten the notation, we often omit $z$.
By
the
chain rule we get
\begin{align}\label{eq:partialderivatives1}
  \partial_x [v_1(e^{tM} x_H)] = \langle \partial_x
  e^{tM} x_H, (\nabla v_1)(e^{tM} x_H) \rangle = \langle 
  e^{tM} \left( \begin{array}{c} 1 \\ 0 \end{array}
  \right), (\nabla v_1)(e^{tM}x_H) \rangle
\end{align}
and 
\begin{align}\label{eq:partialderivatives2}
  \partial_y [v_1(e^{tM}x_H)] = \langle e^{tM} \left(
  \begin{array}{c} 0 \\ 1 \end{array} \right),
  (\nabla v_1) (e^{tM} x_H) \rangle
\end{align}
for a scalar function $v_1$, where $\langle\cdot,\cdot\rangle$ denotes the inner product in $\R^2$.
So, we see 
that
\begin{align}\label{eq:partialderivatives3}
  \nabla_H [v_1(e^{tM}x_H)] = (e^{tM})^T (\nabla_H v_1)
  (e^{tM} x_H).
\end{align}
The same holds for $v_2$.
Let $\nabla_H v := (\nabla_H v_1 \;\; \nabla_H v_2)^T$
for $v = (v_1, v_2)$, so we write
\begin{align*}
  \nabla_H [v (e^{tM} x_H)] = [(e^{tM})^T 
  (\nabla v_1 \;\; \nabla v_2) (e^{tM} x_H)]^T =
  (\nabla_H v) (e^{tM} x_H) e^{tM}.
\end{align*}
For checking that $T_H(\cdot)$ restricts to a semigroup on
$L_{\overline{\sigma}}^p(\Omega)$, we now compute
\begin{align*}
  \mathrm{div}_H \, \overline{(T_H(t)v)}(x_H)
  & = \frac{1}{4 \pi (\det Q_t)^{\tfrac{1}{2}}}
  \mathrm{div}_H \, e^{-tM} \int_{\R^2} \overline{v}
  (e^{tM}x_H-x'_H) e^{-\tfrac{1}{4} \langle Q_t^{-1}x_H',
  x_H'\rangle} dx_H' \\
  & = \frac{1}{4 \pi (\det Q_t)^{\tfrac{1}{2}}} \tr
  \nabla_H \left( e^{-tM} \int_{\R^2} \overline{v} (e^{tM}
  x_H - x'_H) e^{-\tfrac{1}{4} \langle Q_t^{-1}x_H', x_H'
  \rangle} dx_H'\right) \\
  & = \frac{1}{4 \pi (\det Q_t)^{\tfrac{1}{2}}} \tr \left(
  e^{-tM} \int_{\R^2} (\nabla_H \overline{v}) (e^{tM} x_H
  - x'_H) e^{-\tfrac{1}{4} \langle Q_t^{-1} x_H', x_H'
  \rangle} dx_H' e^{tM} \right) \\
  & = \frac{1}{4 \pi (\det Q_t)^{\tfrac{1}{2}}} \int_{\R^2}
  (\mathrm{div}_H \, \overline{v}) (e^{tM} x_H - x'_H)
  e^{-\tfrac{1}{4} \langle Q_t^{-1} x_H', x_H' \rangle}
  dx_H' \\
  & = e^{tM}(T_H(t) \mathrm{div}_H \, \overline{v})(x_H).
\end{align*}
Here we have used the representation $\mathrm{div}_H \,
\overline{v} = \tr \nabla_H \overline{v}$ and 
with 
a slightly
abusive 
notation $\mathrm{div}_H \, \overline{(T_H(t)v)}$
involves
the actual Ornstein-Uhlenbeck semigroup in
$L^p(\Omega)^2$, while $T_H(t) \mathrm{div}_H \,
\overline{v}$ uses the corresponding semigroup in
$L^p(\R^2)^2$.
Since $e^{tM}$ and $T_H(t)$ are boundedly invertible, one
can conclude form the above identity that
$\mathrm{div}_H \, \overline{(T_H(t)v)} = 0$ if and only if
$\mathrm{div}_H \, \overline{v} = 0$.
Hence, $T_H(t)$ maps onto $L_{\overline{\sigma}}^p(\Omega)$
or, equivalently expressed in terms of projection
$P_p T_H(t) = T_H(t) P_p$.
Therefore, the same holds also for the complementary space
$(\mathds{1}-P_p)T_H(t) = T_H(t)(\mathds{1}-P_p)$.

Concerning $T_z(\cdot)$, we note that $\cL_{z}^0$ is boundedly
invertible if $\Gamma_D \neq \emptyset$.
Since $\cL_z$ is a bounded perturbation, we take
$\lambda > 0$ sufficiently large such that the operator
$(\lambda-\cL_z)$ is boundedly invertible as well as
\begin{align*}
  \mathrm{div}_H \, \overline{(\lambda-\cL_z) v} & =
  \lambda \mathrm{div}_H \, \overline{v} - \mathrm{div}_H
  \, \left( \frac{1}{2h} \int_{-h}^{h} \partial_z^2 v +
  (\mathds{1}-P_p) \tr_D (\partial_z v) \right) \\
  & = \lambda \mathrm{div}_H \, \overline{v} -
  \mathrm{div}_H \, P_p \tr_D (\partial_z v) = \lambda 
  \mathrm{div}_H \, \overline{v}.
\end{align*}
This gives that $\mathrm{div}_H \, \overline{(\lambda -
\cL_z) v} = 0$ if and only if $\mathrm{div}_H \,
\overline{v} = 0$.
Similarly, we have
\begin{align*}
  (\lambda - \cL_z) \nabla_H g = \lambda \nabla_H g
\end{align*}
for $g \in H^{1, p}(\R^2)$.
Hence, the operator $(\lambda-\cL_z)$ is mapping divergence
free fields as well as gradient fields onto. 
Therefore,
the semigroup $T_z$ generated by $\cL_z$ restricts to
$(C_0)$-semigroups on these invariant subspaces;
see e.g. \cite[Theorem~4.5.5]{Pazy}.

In order to verify the commutator relation
\eqref{eq:commuting_resolvents}, we recall that
$\cL_z= \cL^0_z + (\mathds{1}-P_p)(\tr_D \partial_z v)$ and
the arguments above by the Fourier representation
\eqref{eq:FourierSeries}. 
Finally, we conclude that $\cL^0_z$ as well as $\tr_D$
commute with $T_H(t)$ for $t > 0$, and $(\mathds{1}-P_p)$
commutes with $(\cL_H -\lambda)^{-1}$, since $T_H(\cdot)$
defines a $(C_0)$-semigroup on the two dimensional gradient
fields $\Ran (\mathds{1}-P_p)$. 
\end{proof}

Consequently, we appeal to Trotter's results
\cite[Theorem~1]{Trotter1959}, that is, if
$T_A(t) = e^{t A}$ and $T_B(s) = e^{s B}$ commute for
all $s, t > 0$, then the closure $\overline{A + B}$
generates the semigroup defined by $T(t) = e^{tA} e^{tB}$.

\begin{lemma}\label{lemma:Trotter}
  The sum $\cL' = \cL_H + \cL_z$ with $D(\cL') = D(\cL_H)
  \cap D(\cL_z)$ is closed and then $\cL' = \cL_p$.
  Especially, $\cL_p$ is the generator of the semigroup
  $T(t) := T_H(t) T_z(t)$ in $L^p(\Omega)$ for
  $p \in (1, \infty)$.
\end{lemma}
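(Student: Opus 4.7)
The main tool is Trotter's product formula \cite[Theorem~1]{Trotter1959}, whose hypotheses were verified in Lemma~\ref{lemma:commuting}: since $T_H(t)$ and $T_z(t)$ are commuting $(C_0)$-semigroups on $L^p(\Omega)^2$, their product $T(t) := T_H(t) T_z(t)$ is a $(C_0)$-semigroup whose generator is the closure $\overline{\cL_H + \cL_z}$ with initial domain $D(\cL_H) \cap D(\cL_z)$. The remaining task is to identify this closure with $\cL_p$ and to show that no closure procedure is actually required, i.e.\ that $\cL' := \cL_H + \cL_z$ on $D(\cL_H) \cap D(\cL_z)$ is already closed.

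For the inclusion $\cL_p \subset \cL'$, I would use the anisotropic embedding \eqref{eq:anisotropicinclusion}: given $v \in D(\cL_p) \subset H^{2,p}(\Omega)^2$, one obtains simultaneously $v \in L_z^p H_{xy}^{2,p}$ and $v \in H_z^{2,p} L_{xy}^p$, and the boundary and drift conditions defining $D(\cL_p)$ coincide with those appearing in $D(\cL_H)$ and $D(\cL_z)$. A direct computation from the definitions then gives
\begin{align*}
\cL_H v + \cL_z v = \Delta v + M x_H \cdot \nabla_H v - M v - (\mathds{1}-P_p)\tr_D(\partial_z v) = \cL_p v.
\end{align*}

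For the reverse inclusion $\cL' \subset \cL_p$ the key point is a regularity upgrade from anisotropic $L_z^p H_{xy}^{2,p} \cap H_z^{2,p} L_{xy}^p$ membership to full isotropic $H^{2,p}(\Omega)^2$. Given $v \in D(\cL_H) \cap D(\cL_z)$, one has $\Delta_H v \in L^p(\Omega)^2$ from the first factor and $\partial_z^2 v \in L^p(\Omega)^2$ from the second, so $\Delta v \in L^p(\Omega)^2$. Combined with the boundary conditions $v|_{\Gamma_D}=0$ and $\partial_z v|_{\Gamma_N}=0$, classical elliptic $L^p$-regularity for the Laplacian on the infinite layer $\Omega = \R^2 \times (-h,h)$ — accessible via a partial Fourier transform in $x_H$ reducing the problem to a family of resolvent problems for a one-dimensional elliptic operator on $(-h,h)$ with mixed boundary conditions, together with the Mikhlin multiplier theorem — yields $v \in H^{2,p}(\Omega)^2$, and hence $v \in D(\cL_p)$ since $M x_H \cdot \nabla_H v \in L^p(\Omega)^2$ is already assumed.

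Once $\cL' = \cL_p$ is established, closedness of $\cL'$ is automatic because $\cL_p$ is closed on $H^{2,p}(\Omega)^2$ (its domain being cut out by continuous trace conditions and a closed drift condition). The closure bar in Trotter's theorem is therefore superfluous, and the conclusion $\cL_p$ generates $T(t) = T_H(t) T_z(t)$ follows. The main obstacle I anticipate is the anisotropic-to-isotropic regularity step; the linear drift term $M x_H \cdot \nabla_H v$ is not an obstruction there since it does not enter the argument $\Delta v \in L^p(\Omega)^2$.
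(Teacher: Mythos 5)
Your identification of the domain, $D(\cL_H)\cap D(\cL_z)=D(\cL_p)$, follows the same route as the paper: the inclusion $D(\cL_p)\subset D(\cL_H)\cap D(\cL_z)$ via \eqref{eq:anisotropicinclusion}, and the converse by upgrading $\Delta_H v,\ \partial_z^2 v\in L^p(\Omega)^2$ plus the boundary conditions to full $H^{2,p}(\Omega)^2$ regularity through a partial Fourier transform/Mikhlin argument (the paper phrases this as the mixed-derivative estimate $\|\nabla_H\partial_z v\|_{L^p}\leq C\|\Delta_H v\|_{L^p}+\|\partial_z^2 v\|_{L^p}$, which is the same computation). That part is sound.

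The genuine gap is the closedness step. You assert that closedness of $\cL'=\cL_p$ is ``automatic because $\cL_p$ is closed on $H^{2,p}(\Omega)^2$ (its domain being cut out by continuous trace conditions and a closed drift condition).'' That is not an argument: closedness of an unbounded operator in $L^p(\Omega)^2$ means that if $v_n\to v$ and $\cL_p v_n\to w$ in $L^p(\Omega)^2$, then $v\in D(\cL_p)$ and $\cL_p v=w$; to pass to the limit in the trace and drift conditions you first need control of $\|v_n\|_{H^{2,p}}+\|Mx_H\cdot\nabla_H v_n\|_{L^p}$ by the graph norm $\|v_n\|_{L^p}+\|\cL_p v_n\|_{L^p}$, i.e.\ an a priori estimate. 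Because the drift coefficient $Mx_H$ is unbounded, this estimate is exactly the nontrivial content here: for Ornstein--Uhlenbeck operators in the whole space it is a theorem of \cite{Metafune2001} and \cite{MetafunePruessetall2002}, proved via local elliptic regularity and the Dore--Venni theorem \cite{DoreVenni1987} on sums of commuting operators with bounded imaginary powers, not a soft consequence of the domain being defined by ``continuous conditions.'' The paper supplies this step by localizing in the vertical variable with a partition of unity, extending by odd/even reflection (according to the Dirichlet/Neumann conditions) to $\R^3$, and invoking the known whole-space closedness; the boundary conditions survive the limit because the traces are continuous for the graph norm once the $H^{2,p}$ a priori bound is in hand. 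Without some such argument (or an alternative, e.g.\ proving surjectivity of $\lambda-\cL'$ for a $\lambda$ in the resolvent set of the generator of $T(t)$, which would force $\cL'=\overline{\cL'}$), the closure bar in Trotter's theorem cannot be removed and the lemma is not proved.
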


\begin{proof}
First one proves that $D(\cL_H) \cap D(\cL_z) = D(\cL_p)$.
Note that with $\Delta_H$ defined on $D(\Delta_H) = L_z^p
H_{xy}^{2, p}$ we see
\begin{align*}
  D(\cL_H) & = D(\Delta_H) \cap \{ v \in L^p(\Omega)^2 : 
  M x_H \cdot \nabla_H v \in L^p(\Omega)^2 \}, \\
  D(\cL_p) & = \{ v \in H^{2, p}(\Omega)^2 : \partial_z
  u|_{\Gamma_N} = 0, \, u|_{\Gamma_D} = 0 \} \cap \{ v \in
  L^p(\Omega)^2 : M x_H \cdot \nabla_H v \in L^p(\Omega)^2
  \}.
\end{align*}
Hence, it is sufficient to prove that
\begin{align*}
  L_z^p H_{xy}^{2, p} \cap \{ v \in H_z^{2, p} L_z^p :
  \partial_z u|_{\Gamma_N} = 0, \, u|_{\Gamma_D} = 0 \}
  = \{ v \in H^{2, p}(\Omega)^2 : \partial_z u|_{\Gamma_N}
  = 0, \, u|_{\Gamma_D} = 0 \}.
\end{align*}
The inclusion '$\supset$' certainly holds by
\eqref{eq:anisotropicinclusion}.
Besides, for the converse '$\subset$' one has to prove
for mixed derivatives $\nabla_H \partial_z v \in L^p
(\Omega)^{2 \times 2}$.
Recall $D(\cL_z) = D(\cL^0_z)$ and the Fourier
representation in $\R^2 \times S^1$ to see
\begin{align}\label{eq:FourierRepresentation}
  v(x_H, z) := \int_{\R^2} \sum_{k \in \N_0}
  e^{i \xi_H \cdot x_H} \hat{v}_k (\xi_H) 
  \cos \left( \frac{h k z}{\pi} \right) d \xi_H,
\end{align}
where
\begin{align*}
  \hat{v}_k (\xi_H) = \frac{1}{2 \pi} \int_{\R^2}
  e^{-i \xi_H \cdot x_H} \frac{1}{2h} \int_{-h}^h 
  e^{-i k \frac{\pi}{h}z} v(x_H, z) dz dx_H.
\end{align*}
Taking odd or even parts iteratively, one arrives at
Dirichlet, Neumann and mixed boundary conditions.
So, we can show that there exists a constant $C > 0$
such that
\begin{align*}
  \| \nabla_H \partial_z v \|_{L^p(\Omega)}
  \leq C \| \Delta_H v \|_{L^p(\Omega)} + \| \partial_z^2
  v \|_{L^p(\Omega)},
\end{align*}
which proves the claim.

It is known that Ornstein-Uhlenbeck operators in the whole
space are closed;
see \cite[Proposition 3.2]{Metafune2001} and
\cite[Theorem 4.1]{MetafunePruessetall2002}.
The proofs are based on the local elliptic regularity and
the results by Dore and Venni \cite{DoreVenni1987} on
closedness of commuting operators which uses bounded
imaginary powers, respectively.
To prove that the Ornstein-Uhlenbeck operators $\cL_p$ in
the layer $\Omega$ is closed, we assume that there is a
sequence $(v_n) \in D(\cL_p)$ such that both $(v_n)$ and
$(w_n) := (\cL_p v_n)$ are Cauchy sequences in
$L^p(\Omega)^2$.
Take a smooth partition of unity $\mathds{1}_{[-h, h]}$ as
\begin{align*}
  \phi_u, \phi_b : [-h, h] \rightarrow [0, 1], \quad \phi_u
  + \phi_b \equiv 1, \quad \supp \, \phi_u \subset (-h/2,
  h], \quad \supp \, \phi_b \subset [-h, h/2).
\end{align*}
Put $(v^u_n) := (\phi_u v_n)$, $(v^b_n) := (\phi_b v_n)$,
$(w^u_n) := (\phi_u w_n)$ and $(w^b_n) := (\phi_b w_n)$ 
in $L^p(\Omega)^2$.
We choose the alternative extension $E$ by the odd and
even reflection for Dirichlet and Neumann conditions,
respectively, to extend these to sequences
$((Ev)^u_n) = (E \phi_u v_n)$,
$((Ev)^b_n) = (E \phi_b v_n)$,
$((Ew)^u_n) = (E \phi_u w_n)$ and
$((Ew)^b_n) = (E \phi_b w_n)$ in $L^p(\R^3)^2$ with
supports in $\R^2 \times (-2h, 2h)$.
Note that $E \phi_u w_n$ and $E \phi_b w_n$ are in the
domain of the Ornstein-Uhlenbeck operator on $\R^3$
since the extension by reflexion the extended
sequences satisfy the original boundary conditions as well.
Hence, the closedness can be drawn back to the case in the
whole space.
Using the fact that the traces are bounded with respect to
the graph norm of $\cL_p$ which is stronger than $H^{2, p}$
norm, we guarantee that boundary conditions are preserved.
\end{proof}

Combing Lemma~\ref{lemma:commuting} and
Lemma~\ref{lemma:Trotter}, we state the following
proposition.

\begin{proposition}\label{prop:restrictedsemigroup}
  The restriction of $\cL_p$ to
  $L_{\overline{\sigma}}^p(\Omega)$ is the generator of
  the $(C_0)$-semigroup defined by $T(t) = T_H(t) T_z(t)$
  in $L_{\overline{\sigma}}^p(\Omega)$.
\end{proposition}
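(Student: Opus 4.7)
The plan is to combine Lemma~\ref{lemma:Trotter} with Lemma~\ref{lemma:commuting}(c) via the standard fact that the restriction of a $(C_0)$-semigroup to a closed invariant subspace is again a $(C_0)$-semigroup, whose generator is the part of the original generator in that subspace; see, e.g., \cite[Theorem~4.5.5]{Pazy}. By Lemma~\ref{lemma:Trotter}, $\cL_p$ generates the $(C_0)$-semigroup $T(t) = T_H(t) T_z(t)$ on $L^p(\Omega)^2$. By Lemma~\ref{lemma:commuting}(c), both factors $T_H(t)$ and $T_z(t)$ leave $L^p_{\overline{\sigma}}(\Omega)$ invariant, so the product $T(t)$ does as well and restricts to a $(C_0)$-semigroup $\tilde T(t)$ on $L^p_{\overline{\sigma}}(\Omega)$.

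By the restriction theorem just quoted, the generator of $\tilde T$ is the part of $\cL_p$ in $L^p_{\overline{\sigma}}(\Omega)$, i.e.\ the operator acting as $\cL_p$ on
\[
  \{ v \in D(\cL_p) \cap L^p_{\overline{\sigma}}(\Omega) : \cL_p v \in L^p_{\overline{\sigma}}(\Omega) \}.
\]
To finish the identification with $\cL_p$ simply restricted to $D(\cL_p) \cap L^p_{\overline{\sigma}}(\Omega)$, I would show that the range condition $\cL_p v \in L^p_{\overline{\sigma}}(\Omega)$ is automatic. For this, use the splitting $\cL_p = \cL_H + \cL_z$ on $D(\cL_p) = D(\cL_H) \cap D(\cL_z)$ from Lemma~\ref{lemma:Trotter}: applying the same restriction theorem separately to $T_H(t)|_{L^p_{\overline{\sigma}}(\Omega)}$ and $T_z(t)|_{L^p_{\overline{\sigma}}(\Omega)}$ yields $\cL_H v, \cL_z v \in L^p_{\overline{\sigma}}(\Omega)$ for $v \in D(\cL_p) \cap L^p_{\overline{\sigma}}(\Omega)$, so $\cL_p v \in L^p_{\overline{\sigma}}(\Omega)$ as well.

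The technical content of the argument is thus entirely contained in Lemma~\ref{lemma:commuting}(c), whose calculations for $\mathrm{div}_H\,\overline{(T_H(t)v)}$ and $\mathrm{div}_H\,\overline{(\lambda-\cL_z)v}$ verify that each factor really does preserve $L^p_{\overline{\sigma}}(\Omega)$—the correction term $-(\mathds{1}-P_p)\tr_D(\partial_z v)$ in the definition of $\cL_p$ being designed precisely for this purpose. The main obstacle to anticipate is accordingly the verification of invariance; once this is in place, the rest of the proof is purely formal and amounts to assembling the two previous lemmas with the abstract restriction theorem.
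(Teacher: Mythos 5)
Your proposal is correct and follows essentially the same route as the paper, which offers no argument beyond the one-line remark that the proposition follows by combining Lemma~\ref{lemma:commuting} and Lemma~\ref{lemma:Trotter}; you have simply made that combination explicit via the standard restriction theorem. Your treatment of the range condition $\cL_p v \in L^p_{\overline{\sigma}}(\Omega)$ is a welcome extra detail the paper glosses over, though the cleanest justification is that invariance of both $L^p_{\overline{\sigma}}(\Omega)$ and the complementary gradient space (Lemma~\ref{lemma:commuting}(c)) gives $P_p T_H(t)=T_H(t)P_p$ and $P_p T_z(t)=T_z(t)P_p$, hence $P_p$ commutes with the generators, rather than invoking the restriction theorem a second time.
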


%%%%%%%%%%%%%%%%%%%%%%%%%%%%%%%%%%%%%%%%%%%%%%%%%%%%%%%%%%%

\section{Smoothing properties}\label{sec:smooth} 
In order to apply a Fujita-Kato type iteration as in
\cite{HieberKashiwabara2015}, we need some smoothing
properties of $T(t)$.

\begin{proposition}\label{prop:LpLqsmoothing}
  Let $p \in (1, \infty)$, $T > 0$ and $\theta_1, \theta_2
  \in [0, 1]$ with $\theta_1 + \theta_2 \leq 1$.
  There exists a constant $C > 0$ depending only on $T$
  and $p$ such that
  \begin{align}\label{ineq:4.1}
    \| e^{t \cL_p} f \|_{H^{2 \theta_1 + 2 \theta_2, p}
    (\Omega)} \leq C t^{-\theta_1} \| f \|_{H^{2 \theta_2,
    p}(\Omega)} \quad \text{for} \quad t \in (0, T)
  \end{align}
  and $f \in H^{2 \theta_2,p}(\Omega)^2$.
  If in addition $\theta_1 > 0$, then
  \begin{align*}
    \lim_{t \to 0+} t^{\theta_1} \| e^{t \cL} f
    \|_{H^{2 \theta_1 + 2 \theta_2, p}(\Omega)} = 0.
  \end{align*}
\end{proposition}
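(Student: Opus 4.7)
The plan is to exploit the product representation $T(t) = T_H(t) T_z(t)$ from Proposition~\ref{prop:restrictedsemigroup} and bound each factor separately in an anisotropic scale, then pass to the isotropic space $H^{2\theta_1+2\theta_2,p}(\Omega)$. Because $T_H(t)$ and $T_z(t)$ commute, vertical derivatives can be routed through the $T_H$-factor and horizontal derivatives through the $T_z$-factor, so each derivative lands on the semigroup that actually smooths it: $T_z(t)$ is analytic and smooths in $z$, whereas $T_H(t)$, though not analytic, is a horizontal convolution with an explicit Gaussian-type kernel.

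For the vertical factor, analyticity of $T_z(t)$ on $L^p(\Omega)^2$ (Section~\ref{sec:layer}), combined with the fact that $T_z(t)$ acts only in $z$ and hence commutes with horizontal Bessel potentials, yields
\[
\|T_z(t) g\|_{H^{2\alpha,p}_z H^{2\gamma,p}_{xy}} \leq C t^{-(\alpha-\beta)} \|g\|_{H^{2\beta,p}_z H^{2\gamma,p}_{xy}}
\]
for $0 \leq \beta \leq \alpha \leq 1$ and $0 \leq \gamma \leq 1$. For the horizontal factor, one differentiates the kernel $k_t$ of \eqref{eq:TH} directly: using $\det Q_t \geq t^2$ and $\|Q_t^{-1}\| \leq C t^{-1}$ one shows $\|\partial_{x_H}^{\gamma} k_t\|_{L^1(\R^2)} \leq C_\gamma\, t^{-|\gamma|/2}$, and Young's inequality in $x_H$ combined with the polynomial bound on $e^{tM}$ gives
\[
\|T_H(t) h\|_{L^p_z H^{2\alpha,p}_{xy}} \leq C t^{-(\alpha-\beta)} \|h\|_{L^p_z H^{2\beta,p}_{xy}}
\]
first for integer $2\alpha, 2\beta$ and then on the full fractional range by complex interpolation; moreover $T_H(t)$ commutes with $\partial_z$, so vertical regularity persists on both sides.

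Composing the two factor estimates through commutativity and splitting $\theta_1 = \theta_1^z + \theta_1^H$ produces the anisotropic bound $\|T(t)f\|_{H^{2\theta_1^z,p}_z H^{2\theta_1^H+2\theta_2,p}_{xy} \cap H^{2\theta_1^z+2\theta_2,p}_z H^{2\theta_1^H,p}_{xy}} \leq C t^{-\theta_1}\|f\|_{H^{2\theta_2,p}(\Omega)}$. To recover the isotropic estimate \eqref{ineq:4.1}, I would identify $H^{2,p}(\Omega)$ with the intersection $H^{2,p}_z L^p_{xy} \cap L^p_z H^{2,p}_{xy}$ at the integer endpoint, using the mixed-derivative control $\|\nabla_H \partial_z v\|_{L^p} \leq C(\|\Delta_H v\|_{L^p} + \|\partial_z^2 v\|_{L^p})$ already established in the proof of Lemma~\ref{lemma:Trotter}, and then complex-interpolate between $L^p$ and $H^{2,p}(\Omega)$ to reach every $0 \leq \theta_1 + \theta_2 \leq 1$. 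The little-o conclusion then follows by a standard density argument: the $\theta_1 = 0$ case shows $T(t)$ uniformly bounded on $H^{2\theta_1+2\theta_2,p}(\Omega)$, whence $t^{\theta_1}\|T(t)g\|_{H^{2\theta_1+2\theta_2,p}}\to 0$ for every $g$ in that dense subspace; for arbitrary $f \in H^{2\theta_2,p}(\Omega)$, writing $f = g + (f-g)$ and applying \eqref{ineq:4.1} to the remainder closes the limit. The main obstacle is the last step: the clean embedding of anisotropic intersections into the isotropic Bessel potential space $H^{s,p}(\Omega)$ at non-integer $s$ while respecting the boundary conditions encoded in $D(\cL_p)$, which is what forces the identification of $H^{2,p}(\Omega)$ with the intersection and the subsequent interpolation to be carried out carefully.
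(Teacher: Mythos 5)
Your proposal follows essentially the same route as the paper: split the smoothing between the analytic vertical semigroup $T_z(t)$ and the explicit Kolmogorov kernel of $T_H(t)$ (using $\det Q_t \geq t^2$, $\|Q_t^{-1}\|\leq Ct^{-1}$ and Young's inequality), establish the two endpoint estimates $\|e^{t\cL_p}f\|_{H^{2,p}}\leq Ct^{-1}\|f\|_{L^p}$ and $\|e^{t\cL_p}f\|_{H^{2,p}}\leq C\|f\|_{H^{2,p}}$, obtain the fractional range by complex interpolation (with the mixed-derivative bound from Lemma~\ref{lemma:Trotter} reconciling the anisotropic and isotropic scales), and close the little-o statement by density. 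The only difference is presentational — you organize the intermediate cases through an explicit anisotropic splitting $\theta_1=\theta_1^z+\theta_1^H$, whereas the paper interpolates directly between the two integer endpoints — so the argument is correct and equivalent.
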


\begin{proof}
For the first inequality with $\theta_1 = 1$ and
$\theta_2 =0$ we compute
\begin{align*}
  \| e^{t \cL_p} f \|_{H^{2, p}(\Omega)} \leq  C \| \Delta e^{t \cL_p} f \|_{H^{2, p}(\Omega)} \leq 
  C \| \Delta_H
  T_H(t) T_z(t) f \|_{L^p(\Omega)} + C \| \Delta_z T_z(t)
  T_H(t) f \|_{L^p(\Omega)},
\end{align*}
for some $C>0$ and with $\Delta = \Delta_H + \Delta_z$. By analyticity of $T_z(t) = e^{t \cL_z}$, one already has
\begin{align*}
  \| \Delta_z T_z(t) T_H(t) f \|_{L^p(\Omega)} \leq
  C t^{-1} \| T_H(t) f \|_{L^p(\Omega)}
\end{align*}
for $t \in (0, T)$ with some $C > 0$.
Considering $\Delta_H T_H(t) T_z(t) f$, we use the explicit
representation of the Kolmogrov kernel \eqref{eq:TH} to
follow the line of \cite[Proposition~3.4]{HieberSawada2005}
in the case $n = 2$ for $g := T_z(t) f$.
From \ref{eq:partialderivatives1},
\ref{eq:partialderivatives2} and 
\ref{eq:partialderivatives3} we deduce for any scalar
valued function $\psi$
\begin{align*}
  \Delta_H [\psi(e^{tM} x_H)] & = \mathrm{div}_H \,
  \nabla_H [\psi (e^{tM} x_H)] \\
  & = \mathrm{div}_H \, [(e^{tM})^T \nabla_H \psi]
  (e^{tM} x_H) \\
  & = [\mathrm{div}_H \, e^{t(M^T+M)} \, \nabla_H \psi]
  (e^{tM} x_H).
\end{align*}
Putting $A_H := \mathrm{div}_H \, e^{t(M^T+M)} \, 
\nabla_H$, we thus get
\begin{align}
  \notag \Delta_H T_H(t) g(x_H, z) & = \frac{1}{4 \pi
  (\det Q_t)^{1/2}} e^{-tM} \int_{\R^2} (\Delta_H)_{x_H}
  g(e^{tM} x_H - x_H', z) e^{-\frac{1}{4} \langle Q_t^{-1}
  x_H', x_H' \rangle} dx_H' \\
  \label{eq:deltath} & = \frac{1}{4 \pi (\det Q_t)^{1/2}}
  e^{-tM}  \int_{\R^2} (A_H)_{x_H} g(e^{tM} x_H - x_H', z)
  e^{-\frac{1}{4} \langle Q_t^{-1} x_H',x_H' \rangle} dx_H'
  \\
  \notag & = \frac{1}{4 \pi (\det Q_t)^{1/2}} e^{-tM} 
  \int_{\R^2} (A_H)_{x_H'} g(e^{tM} x_H - x_H', z)
  e^{-\frac{1}{4} \langle Q_t^{-1} x_H', x_H' \rangle}
  dx_H' \\
  \notag & = \frac{1}{4 \pi (\det Q_t)^{1/2}} e^{-tM}
  \int_{\R^2} g(e^{tM} x_H - x_H', z) (A_H)_{x_H'}
  e^{-\frac{1}{4}\langle Q_t^{-1} x_H',x_H'\rangle} dx_H'
  \\
  \notag & = \frac{1}{4 \pi (\det Q_t)^{1/2}} e^{-tM}
  \int_{\R^2} g(e^{tM} x_H - x_H', z) e^{-\frac{1}{4}
  \langle Q_t^{-1} x_H',x_H'\rangle} \\
  \notag & \qquad \quad \cdot \frac{-1}{2} \left[ \tr
  \left( e^{t(M^T+M)} Q_t^{-1} \right) - \frac{1}{2}
  \langle e^{t(M^T+M)} Q_t^{-1} x_H',Q_t^{-1} x_H' \rangle
  \right] dx_H'.
\end{align}
We substitute $x_H' = Q_t^{1/2}y_H'$ to give
\begin{align*}
  \Delta_H T_H(t) g(x_H, z)
  & = \frac{-1}{8 \pi} e^{-tM} \int_{\R^2} g(e^{tM} x_H -
  Q_t^{-\frac{1}{2}} y_H', z) e^{-\frac{1}{4} \langle y_H',
  y_H' \rangle} \\
  & \qquad \cdot \left[ \tr \left(e^{t(M^T+M)}
  Q_t^{-1} \right) - \frac{1}{2} \langle e^{t(M^T+M)}
  Q_t^{-1} y_H', y_H' \rangle \right] dy_H' \\
  & = \frac{-1}{8 \pi} e^{-tM} \int_{\R^2} g(
  Q_t^{-\frac{1}{2}} (Q_t^{\frac{1}{2}} e^{tM} x_H - y_H'),
  z) e^{-\frac{1}{4} \langle y_H', y_H'\rangle} \\
  & \qquad \cdot \left[ \tr
  \left( e^{t(M^T+M)} Q_t^{-1} \right) - \frac{1}{2}
  \langle e^{t(M^T+M)} Q_t^{-1} y_H',y_H' \rangle \right]
  dy_H'\\
  & = \left(g(Q_t^{-\frac{1}{2}} \cdot,z) \ast_H
  \Psi \right)(Q_t^{1/2} e^{tM}x_H).
\end{align*}
Here
$$
  \Psi(x_H) := \frac{-1}{8 \pi} e^{-tM} e^{-\frac{1}{4}
  \langle x_H, x_H\rangle} \left[ \tr \left(e^{t(M^T+M)}
  Q_t^{-1}\right) - \frac{1}{2} \langle e^{t(M^T+M)}
  Q_t^{-1}x_H,x_H\rangle \right].
$$
By direct calculation we see
\begin{align*}
  |\Psi(x_H)| \leq \frac{1}{16 \pi} \| e^{t(M^T+M)}
  Q_t^{-1} \| \cdot \| e^{-tM} \| \cdot (2+|x_H|^2)
  e^{-\frac{1}{4}|x_H|^2}.
\end{align*}
and by Young's inequality,
\begin{align*}
  \| \Delta_H T_H(t) g(x_H, z) \|_{L^p(\Omega)}
  & = \| \left( g(Q_t^{-1/2} \cdot, \cdot) \ast_H \Psi
  \right) (Q_t^{1/2} e^{tM} \cdot) \|_{L^p(\Omega)} \\
  & = \det(Q_t^{1/2} e^{tM})^{1/p} \| g(Q_t^{-1/2}
  \cdot, \cdot) \ast_H \Psi \|_{L^p(\Omega)} \\
  & \leq \det(Q_t^{1/2} e^{tM})^{1/p} \| g(Q_t^{-1/2}
  \cdot, \cdot) \|_{L^q(\Omega)} \| Psi \|_{L^r(\Omega)} \\
  & = \det(Q_t^{1/2} e^{tM})^{1/p}
  (\det Q_t^{-1/2})^{1/q} \| g \|_{L^q(\Omega)}
  \| \Psi \|_{L^r(\Omega)} \\
  & \leq C_r e^{t \cdot \tr M / p} (\det Q_t)^{\frac{1}{2p}
  - \frac{1}{2q}} \| e^{t(M^T+M)} Q_t^{-1} \| \cdot \| 
  e^{-tM} \| \cdot \| g \|_{L^q(\Omega)} \\
  & \leq C_r e^{t \cdot \tr M / p} (\det Q_t)^{\frac{1}{2p}
  - \frac{1}{2q}} \| e^{tM} \|^2 \cdot \| e^{-tM} \| \cdot
  \| Q_t^{-1} \| \cdot \| g \|_{L^q(\Omega)}
\end{align*}
for $1/p + 1 = 1/q + 1/r$, where $C_r := \frac{1}{8 \pi}
\| (2 + | \cdot |^2) e^{-\frac{1}{4}|\cdot|^2}
\|_{L^r(\Omega)}$.
Since $\| Q_t \|^{-1} \leq \frac{C}{t}$ for $t > 0$ with
some $C$, we have  
\begin{align*}
  \| \Delta_H T_H(t) g(x_H, z) \|_{L^p(\Omega)} \leq
  \frac{C}{t} \| g \|_{L^p(\Omega)}
\end{align*}
for $t \in (0, T)$ by choosing $q = p$ and $r = 1$.

For the case $\theta_1 = 0$, $\theta_2 = 1$ we obtain as
above by the analyticity of $T_z(t)$
\begin{align}\label{eq:normh2p}
  \| e^{t \cL_p} f \|_{H^{2, p}(\Omega)} \leq \| \Delta_H
  T_H(t) T_z(t) f \|_{L^p(\Omega)} + C \| T_H(t) f
  \|_{H^{2, p}(\Omega)}.
\end{align}
Put $g := T_z(t) f$, we have from (\ref{eq:deltath})
\begin{align*}
  \Delta_H T_H(t) g(x_H,z) & = \frac{1}{4 \pi
  (\det Q_t)^{1/2}} e^{-tM} \int_{\R^2} [(A_H)_{x_H} g]
  (e^{tM}x_H -x_H', z) e^{-\frac{1}{4} \langle Q_t^{-1}
  x_H',x_H'\rangle} dx_H' \\
  & = (A_H g \ast_H k_t)(e^{tM}x_H, z).
\end{align*}
Thus, it follows that
\begin{align*}
  \| \Delta_H T_H(t) T_z(t)f \|_{L^p(\Omega)}^p & =
  \int_{-h}^h \| (A_H g \ast_H k_t)(e^{tM}\cdot, z)
  \|_{L^p(\R^2)}^p dz \\
  & = e^{t \cdot \tr M} \int_{-h}^h \| (A_H g \ast_H k_t)
  (\cdot, z) \|_{L^p(\R^2)}^p dz \\
  & \leq e^{t \cdot \tr M} \int_{-h}^h \| A_H g (\cdot, z)
  \|_{L^p(\R^2)}^p \| k_t \|_{L^1(\R^2)}^p dz \\
  & = e^{t \cdot \tr M} \| e^{-tM} \| \int_{-h}^h \| A_H
  g(\cdot, z) \|_{L^p(\R^2)}^p dz \\
  & = e^{t \cdot \tr M} \| e^{-tM} \| \cdot \| A_H g
  \|_{L^p(\Omega)}^p \\
  & \leq e^{t \cdot \tr M} \| e^{-tM} \| \cdot \| 
  e^{t(M^T+M)} \| \cdot \| g \|_{H^{2, p}(\Omega)}^p \\
  & \leq C \| f \|_{H^{2,p}(\Omega)}
\end{align*}
by the analyticity of $T_z(t)$.
The second term in (\ref{eq:normh2p}) can be handled
similarly as
\begin{align*}
  \| T_H(t) f \|_{H^{2, p}(\Omega)} & \leq \| \Delta_H 
  T_H(t) f \|_{L^p(\Omega)} + \| T_H(t) \partial_{zz} f
  \|_{L^{p}(\Omega)} \\
  & \leq C \| f \|_{H^{2, p}(\Omega)} + \| \partial_{zz}
  f \|_{L^{p}(\Omega)} \\
  & \leq C \| f \|_{H^{2, p}(\Omega)}.
\end{align*}
The statement for $\theta_1,\theta_2 \in (0, 1)$ follows
from the interpolation of the inequalities
\begin{align*}
  \| e^{t \cL_p} f \|_{H^{2, p}(\Omega)}
  \leq C t^{-1} \| f \|_{L^p(\Omega)}
  \quad \text{and} \quad 
  \| e^{t \cL_p} f \|_{H^{2, p}(\Omega)}
  \leq C  \| f \|_{H^{2, p}(\Omega)}.
\end{align*}

For the limit $t \to 0+$, we apply to an approximation
argument as $g_n \in C_c(\Omega)$ for $n \in \N$ such that
$g_n \to g$ in $H^{2 \theta_2, p}(\Omega)$.
So, we see
\begin{align*}
  & t^{\theta_1} \| T_H(t) g(x_H,z) \|_{H^{2 \theta_1 + 2
  \theta_2, p}(\Omega)} \\
  & \qquad \leq t^{\theta_1} \| T_H(t) (g(x_H, z) -
  g_n(x_H, z)) \|_{H^{2 \theta_1 + 2 \theta_2, p}(\Omega)}
  + t^{\theta_1} \| T_H(t) g_n(x_H, z) \|_{H^{2 \theta_1 +
  2 \theta_2, p}(\Omega)} \\
  & \qquad \leq C \| g(x_H,z) - g_n(x_H,z) \|_{H^{2
  \theta_2, p}(\Omega)} + t^{\theta_1} \| T_H(t) g_n(x_H,z)
  \|_{H^{2, p}(\Omega)} \\
  & \qquad \leq C \| g(x_H, z) - g_n(x_H, z) \|_{H^{2
  \theta_2, p}(\Omega)} + t^{\theta_1} \| g_n(x_H, z)
  \|_{H^{2, p}(\Omega)}.
\end{align*}
We firstly choose $n$ sufficiently large so that
the first term in the right hand side of the last
inequality small, and secondly take the limit $t \to 0+$.
This completes the proof.
\end{proof}

\begin{remark}
  If $\tr M = 0$ and $M$ has two purely imaginary
  eigenvalues, then the constant $C$ in \eqref{ineq:4.1}
  is independent of $T$.
  Moreover, if $M$ is anti-symmetric, then $C = 1$.
\end{remark}

%%%%%%%%%%%%%%%%%%%%%%%%%%%%%%%%%%%%%%%%%%%%%%%%%%%%%%%%%%%

\section{Mild solutions}\label{sec:mild}
On the primitive equations in the $L^p$ framework with
decaying initial data, time-local unique mild solutions
have been constructed in \cite{HieberKashiwabara2015}
adapting the Fujita-Kato scheme in the spaces
\begin{eqnarray*}
  V_{\theta, p} := [ L^p_{\overline{\sigma}}(\Omega),
  D(A_p) ]_\theta \subset H^{2 \theta, p}(\Omega)^2
  \cap L^p_{\overline{\sigma}}(\Omega)
\end{eqnarray*}
for $p \in (1, \infty)$ with some $\theta \in (0, 1)$,
where $[\cdot, \cdot]_\theta$ denotes by the complex
interpolation;
see \cite[Equation~(4.10)]{HieberKashiwabara2015}.
In what follows, the initial disturbance $v_0$ is taken
from
\begin{align*}
  V_{\frac{1}{p}, p} := \{ v \in H^{\frac{2}{p}, p}
  (\Omega)^2 \cap L^p_{\overline{\sigma}}(\Omega) :
  v|_{\Gamma_D} = 0 \}
\end{align*}
as in \cite[Section~4]{HieberHusseinKashiwabara2016};
where the reader can find the explicit characterization of the interpolation space.
Besides, we deal with mild solutions $v$ in
\begin{align*}
  V_{\frac{1}{2}+\frac{1}{2p}, p} \subset H^{1+\frac{1}{p},
  p}(\Omega)^2 \cap L^p_{\overline{\sigma}}(\Omega) =:
  H_{\overline{\sigma}}^{1+\frac{1}{p}, p}(\Omega)
\end{align*}
for each $t$, whence the mild solution exists.
Here, we appeal to the Fujita-Kato scheme directly in
Sobolev spaces rather than in interpolation spaces,
because the semigroup generated by $\cL_p$ is neither
analytic nor enjoying $L^p$-$L^q$ smoothing properties
onto the operator domain of $\cL_p$.
So, it is 
of 
benefit to 
argue in 
the Sobolev spaces 
$H_{\overline{\sigma}}^{1+\frac{1}{p}, p}(\Omega)$;
see Proposition~\ref{prop:LpLqsmoothing}.

We consider the non-linear and remainder terms
\begin{align}\label{Fp}
  F_p(v) & := - P_p \left( v \cdot \nabla_H v + w
  \partial_z v + 2 M v \right)
\end{align}
rewritten as $F_p(v) = F^*_p(v) - 2 P_p M v$ with
$F^*_p(v) := - P_p (v \cdot \nabla_H v + w \partial_z v)$.
In \cite[Lemma~5.1~(a)]{HieberKashiwabara2015} it has been
derived the estimate
\begin{align}\label{Fp-ast}
  \| F_p^*(v) \|_{H^{1+\frac{1}{p}, p}} \leq C \| v
  \|_{H^{1+\frac{1}{p}, p}}^2
\end{align}
with some $C > 0$ for the case of a bounded cylindrical
domain.
The estimate \eqref{Fp-ast} is still valid in an infinite
layer domain, since by anisotropic H\"older estimates
\begin{align*}
  \| w \partial_z v \|_{L^p(\Omega)} 
  & \leq \| w \|_{L_z^\infty L_{xy}^{2p}}
  \| \partial_z v \|_{L_z^p L_{xy}^{2p}}
  \leq C \| w \|_{H_z^{1, p} L_{xy}^{2p}}
  \| v \|_{H_z^{1,p}L_{xy}^{2p}} \\
  & \leq C \| \mathrm{div}_H \, v \|_{L^p_z L_{xy}^{2p}}
  \| v \|_{H_z^{1, p} H_{xy}^{\frac{1}{p}, p}}
  \leq C \| v \|_{L^p_z H_{xy}^{1, 2p}}
  \| v \|_{H_z^{1, p} H_{xy}^{\frac{1}{p}, p}}
  \leq C \| v \|_{H^{1+\frac{1}{p}, p}}^2
\end{align*} 
with $C > 0$ being a universal constant;
recall that $w = w(v) = - \int_{-h}^z \mathrm{div}_H \, v
(x, y, \xi, t) d\xi$.
Here we have used the Sobolev embeddings
\begin{align*}
  H^{1, p}(-h, h) \hookrightarrow L^{\infty}(-h, h),
  \quad H^{1+\frac{1}{p}, p}(\R^2) \hookrightarrow
  H^{1, 2p}(\R^2), \quad H^{\frac{1}{p}, p}(\R^2)
  \hookrightarrow L^{2p}(\R^2)
\end{align*}
by e.g. \cite[Theorem~3.3.1 and Theorem~2.7.1]{Triebel}
and the Poincar{\'e} inequality $\| w \|_{H^{1, p}_z}
\leq C \| \partial_z w \|_{L^p_z}$ applied to $w$ and
\eqref{eq:anisotropicinclusion}.
Similarly, the estimate for the 
term is derived as
\begin{align*}
  \| v \cdot \nabla_H v \|_{L^p(\Omega)} 
  & \leq \| v \|_{L_z^\infty L_{xy}^{2p}} 
  \| \nabla_H v \|_{L_z^p L_{xy}^{2p}}
  \leq C \| v \|_{H^{1, p}_z L_{xy}^{2p}}
  \| v \|_{L_z^p H_{xy}^{1, 2p}} \\
  & \leq C \| v \|_{H_z^{1,p} H_{xy}^{\frac{1}{p}, p}}
  \| v \|_{L_z^p H_{xy}^{1+\frac{1}{p}, p}}
  \leq C \| v \|_{H^{1+\frac{1}{p}, p}}^2.
\end{align*} 
Since $- 2 P_p M v$ is 
a linear term,
 analogously to
\cite[Lemma~5.1]{HieberKashiwabara2015} we can state the
following lemma.

\begin{lemma}\label{lemma:51temp}
  For $p \in (1, \infty)$, the operator $F_p$ maps
  from $H^{1+\frac{1}{p}}(\Omega)^2$ into
  $L^p_{\overline{\sigma}}(\Omega)$, and there exists a
  constant $C > 0$ such that the following two estimates
  hold:
  \begin{itemize}
  \item[(a)] For $v \in H^{1+\frac{1}{p}}(\Omega)^2$
  \begin{align*}
    \| F_p (v) \|_{L^p_{\overline{\sigma}}(\Omega)}
    \leq C (\| v \|_{H^{1+\frac{1}{p}}(\Omega)^2}^2
    + \| v \|_{H^{1+\frac{1}{p}}(\Omega)^2}).
  \end{align*} 
  \item[(b)] For $v, v_\flat \in H^{1+\frac{1}{p}}
  (\Omega)^2$
  \begin{align*}
    \| F_p (v) - F_p (v_\flat) \|_{L^p_{\overline{\sigma}}
    (\Omega)} \leq C (\| v \|_{H^{1+\frac{1}{p}} (\Omega)}
    + \| v_\flat \|_{H^{1+\frac{1}{p}}(\Omega)} + 1) \| v
    - v_\flat \|_{H^{1+\frac{1}{p}}(\Omega)}.
  \end{align*}
  \end{itemize}
\end{lemma}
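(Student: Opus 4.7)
The plan is to follow the decomposition $F_p(v) = F_p^*(v) - 2P_p M v$ introduced above and to bound the quadratic and linear parts separately. All the quantitative input is already in hand: the two anisotropic H\"older estimates $\|v\cdot\nabla_H v\|_{L^p(\Omega)} \le C\|v\|_{H^{1+\frac{1}{p},p}}^2$ and $\|w(v)\partial_z v\|_{L^p(\Omega)} \le C\|v\|_{H^{1+\frac{1}{p},p}}^2$ derived in the paragraphs preceding the lemma, together with the fact that the hydrostatic Helmholtz projection $P_p$ is bounded from $L^p(\Omega)^2$ onto $L^p_{\overline{\sigma}}(\Omega)$.

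For part (a), I would bound $\|F_p^*(v)\|_{L^p_{\overline{\sigma}}(\Omega)}$ by $\|P_p\|$ times the sum of the two convective $L^p$-norms above, giving $C\|v\|_{H^{1+\frac{1}{p},p}}^2$, and control the linear remainder by $\|2P_p M v\|_{L^p_{\overline{\sigma}}(\Omega)} \le C\|v\|_{L^p(\Omega)} \le C\|v\|_{H^{1+\frac{1}{p},p}}$, the last step using the trivial embedding $H^{1+\frac{1}{p},p}(\Omega) \hookrightarrow L^p(\Omega)$. Adding the two contributions gives exactly the form claimed in (a).

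For part (b), the key observation is that the two convective terms are bilinear in $v$ (recall that $v\mapsto w(v)=-\int_{-h}^z \mathrm{div}_H v\,d\xi$ is linear) and the zero-order term $Mv$ is linear. Setting $u:=v-v_\flat$, the standard bilinear telescoping yields
\begin{align*}
F_p(v)-F_p(v_\flat) = -P_p\bigl(u\cdot\nabla_H v + v_\flat\cdot\nabla_H u + w(u)\partial_z v + w(v_\flat)\partial_z u + 2Mu\bigr).
\end{align*}
I would then apply the same anisotropic H\"older and Sobolev chain used in part (a) to each of the four bilinear summands and obtain a bound of the form $C\|u\|_{H^{1+\frac{1}{p},p}}\bigl(\|v\|_{H^{1+\frac{1}{p},p}} + \|v_\flat\|_{H^{1+\frac{1}{p},p}}\bigr)$. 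Adding the linear contribution $C\|u\|_{H^{1+\frac{1}{p},p}}$ and factoring out $\|u\|_{H^{1+\frac{1}{p},p}}$ yields (b).

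No serious obstacle is anticipated. The Sobolev embeddings $H^{1,p}(-h,h)\hookrightarrow L^\infty(-h,h)$, $H^{1+\frac{1}{p},p}(\R^2)\hookrightarrow H^{1,2p}(\R^2)$ and $H^{\frac{1}{p},p}(\R^2)\hookrightarrow L^{2p}(\R^2)$ are symmetric in the two slots of each bilinear term, so each of the four mixed pairings above can be estimated by the same argument. The only bookkeeping point is that the Poincar\'e inequality $\|w(\cdot)\|_{H^{1,p}_z}\le C\|\mathrm{div}_H(\cdot)\|_{L^p_z}$ must be applied to whichever of $u$ or $v_\flat$ occupies the $w$-slot, which is legitimate because the boundary relation $w(\cdot)|_{z=-h}=0$ is preserved by the linearity of $v\mapsto w(v)$.
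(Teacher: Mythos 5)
Your proposal is correct and follows essentially the same route as the paper, which establishes the two anisotropic H\"older--Sobolev estimates for $v\cdot\nabla_H v$ and $w(v)\partial_z v$ immediately before the lemma, invokes boundedness of $P_p$ and the triviality of the linear term $2P_pMv$, and obtains (b) by the same bilinear telescoping (citing the analogous lemma of Hieber--Kashiwabara). Your explicit check that each mixed pairing in the telescoped difference admits the same estimate, and that the Poincar\'e inequality applies to $w(u)$ and $w(v_\flat)$ by linearity of $v\mapsto w(v)$, is exactly the content the paper leaves implicit.
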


Let $T > 0$, and let the space
\begin{eqnarray*}
  S_T := \left\{ v \in C^0([0, T]; V_{\frac{1}{p}, p})
  \cap C^0((0, T]; H_{\overline{\sigma}}^{1+\frac{1}{p}, p}
  (\Omega)^2) : \| v(t) \|_{H^{1+\frac{1}{p}, p}} =
  o(t^{\frac{1}{2p}-\frac{1}{2}}) \, \text{ as } \, t \to
  0 \right\}.
\end{eqnarray*}
This becomes a Banach space equipped with the norm
\begin{align*}
  \| v \|_{S_T} := \sup_{0 \leq t \leq T} \| v(t)
  \|_{H^{\frac{2}{p}, p}} + \sup_{0 \leq t \leq T}
  t^{\frac{1}{2}-\frac{1}{2p}} \| v(t)
  \|_{H^{1+\frac{1}{p}}}.
\end{align*}
The function $v \in C([0, T]; V_{\frac{1}{p}, p})$ is
called a \textit{mild solution} to the primitive equations
with linearly growing data, if $v$ satisfies
\begin{align*}
  v(t) = e^{t \cL_p} v_0 + \int_0^t e^{(t-s)\cL_p} \left(
  P_p f(s) + F_p(v(s))\right) ds
  \quad \text{for} \quad t \in [0, T].
\end{align*}

\begin{theorem}\label{prop:loc_ex}
  Let $p \in (1, \infty)$ and $T > 0$.
  Assume that $v_0 \in V_{\frac{1}{p}, p}$ and $P_p f \in
  C^0((0, T]; L^p_{\overline{\sigma}}(\Omega))$ satisfying
  \begin{eqnarray*}
    \| P_p f(t) \|_{L^p_{\overline{\sigma}}(\Omega)} =
    o(t^{\frac{1}{p}-1}) \quad \hbox{as} \, t \to 0.
  \end{eqnarray*}
  Then there exists $T_\sharp \in (0, T)$ and a unique
  mild solution $v \in S_{T_\sharp}$.
  If in addition $v_0 \in V_{\frac{1}{p}+\varepsilon, p}$
  for some $\varepsilon \in (0, \frac{1}{2}-\frac{1}{2p}]$,
  then
  \begin{align*}
    v \in C^0([0, T_\sharp]; V_{\frac{1}{p} + \varepsilon,
    p}) \cap C^0((0, T_\sharp]; H^{1+\frac{1}{p},
    p}_{\overline{\sigma}}(\Omega)).
  \end{align*}
\end{theorem}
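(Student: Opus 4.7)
The plan is to apply the Fujita--Kato fixed-point scheme to the mild formulation. Define
$$\Phi(v)(t) := e^{t\cL_p}v_0 + \int_0^t e^{(t-s)\cL_p}\bigl(P_p f(s) + F_p(v(s))\bigr)\,ds$$
and seek a fixed point in a closed ball $B_R \subset S_{T_\sharp}$ of small radius $R$ and small length $T_\sharp$. Every smoothing estimate will come from Proposition~\ref{prop:LpLqsmoothing}, since $e^{t\cL_p}$ is not analytic and the standard fractional-power machinery is unavailable.

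For $\Phi\colon B_R \to B_R$, the linear term $e^{t\cL_p}v_0$ is controlled by Proposition~\ref{prop:LpLqsmoothing}: with $\theta_2 = 1/p$ and $\theta_1 = 1/2 - 1/(2p)$ it yields $t^{1/2-1/(2p)}\|e^{t\cL_p}v_0\|_{H^{1+1/p,p}} \to 0$ as $t \to 0$, while strong continuity places it in $C^0([0,T_\sharp]; V_{1/p,p})$. For the Duhamel term I combine
$$\|e^{(t-s)\cL_p}F_p(v(s))\|_{H^{1+1/p,p}} \leq C(t-s)^{-(1/2+1/(2p))}\|F_p(v(s))\|_{L^p}$$
with Lemma~\ref{lemma:51temp}(a), which after using $\|v(s)\|_{H^{1+1/p,p}} \leq s^{1/(2p)-1/2}\|v\|_{S_{T_\sharp}}$ furnishes
$$\|F_p(v(s))\|_{L^p} \leq C\bigl(s^{1/p-1}\|v\|_{S_{T_\sharp}}^2 + s^{1/(2p)-1/2}\|v\|_{S_{T_\sharp}}\bigr).$$
The Beta-function book-keeping
$$\int_0^t(t-s)^{-(1/2+1/(2p))}s^{1/p-1}\,ds = B\bigl(\tfrac{1}{2}-\tfrac{1}{2p},\tfrac{1}{p}\bigr)\,t^{1/(2p)-1/2}$$
then yields a $CR^2$ contribution from the quadratic piece, a $T_\sharp^{1/2-1/(2p)}CR$ contribution from the lower-order linear piece $-2P_pMv$, and an arbitrarily small constant from the forcing once $T_\sharp$ is shrunk enough to exploit $\|P_pf(s)\|_{L^p} = o(s^{1/p-1})$. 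The $H^{2/p,p}$-component of the $S_{T_\sharp}$-norm is handled analogously with $\theta_1 = 1/p$, $\theta_2 = 0$. Contraction on $B_R$ is a parallel computation via Lemma~\ref{lemma:51temp}(b), producing a Lipschitz constant of the form $C(R + T_\sharp^{1/2-1/(2p)})$; choosing $R$ and $T_\sharp$ small makes it strictly less than $1$, and Banach's fixed-point theorem provides the unique $v \in B_R \subset S_{T_\sharp}$.

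For the last assertion I take $v_0 \in V_{1/p+\varepsilon, p}$ and apply Proposition~\ref{prop:LpLqsmoothing} with $\theta_2 = 1/p+\varepsilon$ (admissible because $\varepsilon \leq 1/2 - 1/(2p)$), so that $e^{\cdot\,\cL_p}v_0 \in C^0([0,T_\sharp]; V_{1/p+\varepsilon,p})$ by strong continuity on this interpolation space; an analogous Beta-integral bound for the Duhamel term taken in $H^{2/p+2\varepsilon,p}$ promotes the mild solution to $C^0([0,T_\sharp]; V_{1/p+\varepsilon,p}) \cap C^0((0,T_\sharp]; H^{1+1/p,p}_{\overline{\sigma}}(\Omega))$. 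The principal obstacle is the exponent matching: the weight $t^{1/2-1/(2p)}$ defining $S_{T_\sharp}$ must be reproduced exactly by every Beta integral, and the extra linear term $-2P_pMv$ inside $F_p$, absent in the Navier--Stokes case, is precisely the source of the $T_\sharp^{1/2-1/(2p)}$ contribution, which is why the smallness of $T_\sharp$, rather than the smallness of the ball $B_R$ alone, is needed to close the argument.
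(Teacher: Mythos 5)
Your proposal is correct and follows essentially the same route as the paper: the paper runs the identical Fujita--Kato scheme (phrased as explicit successive approximations $v_{m+1}=v_1+\int_0^t e^{(t-s)\cL_p}F_p(v_m(s))\,ds$ rather than Banach's fixed point theorem on a ball), with the same choices $\theta_1=\tfrac12-\tfrac1{2p}$, $\theta_2=\tfrac1p$ for the initial datum, $\theta_1=\tfrac12+\tfrac1{2p}$, $\theta_2=0$ for the Duhamel term, the same Beta-function bookkeeping, and the same treatment of the extra linear term $-2P_pMv$ via smallness of $T_\sharp^{1/2-1/(2p)}$. The one point to phrase carefully is that the smallness should be imposed only on the weighted seminorm $\sup_t t^{1/2-1/(2p)}\|v(t)\|_{H^{1+1/p,p}}$ (equivalently, center the ball at $v_1$), since $\sup_t\|e^{t\cL_p}v_0\|_{H^{2/p,p}}$ is bounded but not small for general $v_0$; this is precisely how the paper's quantity $k_m$ is set up.
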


\begin{proof}
The proof is essentially based on that of
\cite[Proposition~5.2]{HieberKashiwabara2015}, using
Proposition~\ref{prop:LpLqsmoothing} and
Lemma~\ref{lemma:51temp}.
We begin with the recursive sequence $(v_m)_{m \in \N}$ 
defined as
\begin{align*}
  v_1(t) := e^{t \cL_p} v_0 + \int_0^t e^{(t-s) \cL_p} P_p
  f(s) ds \quad \hbox{and} \quad
  v_{m+1}(t) := v_1(t) + \int_0^t e^{(t-s) \cL_p} F_p
  (v_m(s)) ds.
\end{align*}
To shorten the notation, put $\gamma := \frac{1}{2} +
\frac{1}{2p} \in (0, 1)$ and $V_\gamma := V_{\gamma, p}$.
As in \cite[Proposition~5.2]{HieberKashiwabara2015} we may
inductively prove that this sequence is well-defined in
$S_T$, and that it converges in this space to prove the
following two properties:
\begin{itemize}
  \item[(a)] There exists $T_\sharp \in (0, T]$ such that
  $\| v_m \|_{S_{T_\sharp}}$ is bounded uniformly in
  $m \in \N$.
  \item[(b)] Let $d_m := v_{m+1} - v_m$, then
  there exists a constant $C_\sharp \in (0, 1)$ such that
  \begin{align*}
    \sup_{0 \leq t \leq T_\sharp} t^{1-\gamma} \| d_{m+1}
    (t) \|_{V_\gamma} \leq C_\sharp \sup_{0 \leq t \leq
    T_\sharp} t^{1-\gamma} \| d_m(t) \|_{V_\gamma}
    \quad \text{for} \quad m \in \N.
  \end{align*}
\end{itemize}
As usual, we 
conclude
that the limit of this successive
approximation is in fact a unique mild solution.
So, our main task is to prove that
\begin{align*}
  k_m(t) := \sup_{0 \leq s \leq t} s^{1-\gamma}
  \| v_m \|_{H^{1+\frac{1}{p}, p}}
\end{align*}
is a bounded sequence uniformly in $m \in \N$ for $t$
sufficiently small.
By definition of $v_1$, we see
\begin{align*}
  \| v_1(t) \|_{H^{1+\frac{1}{p}, p}}
  & \leq \| e^{t \cL_p} v_0 \|_{H^{1+\frac{1}{p}, p}}
  + \int_0^t \| e^{(t-s) \cL_p} \|_{\cL(\lpso,
  H_{\overline{\sigma}}^{1+\frac{1}{p}, p})} 
  \| P_p f(s) \|_{\lpso} ds \\
  & \leq C t^{\gamma-1} \| v_0 \|_{H^{\frac{2}{p}, p}} + C
  \sup_{0 \leq s \leq t} s^{1-\frac{1}{p}} \| P_p f(s)
  \|_{\lpso} \int_0^t (t-s)^{-\gamma} s^{\frac{1}{p} -1} 
  ds \\
  & = C t^{\gamma-1} \| v_0 \|_{H^{\frac{2}{p}, p}} + C
  \sup_{0 \leq s \leq t} s^{1-1/p} \| P_p f(s) \|_{\lpso}
  \cdot t^{1-\gamma} \int_0^1 (1-s)^{-\gamma} s^{1/p -1} ds
\end{align*}
and we thus multiply $t^{1-\gamma}$ in both side to derive
\begin{align*}
  k_1(t) \leq C \| v_0 \|_{H^{\frac{2}{p}, p}} + C
  B\left(\tfrac{1}{2}-\tfrac{1}{2p}, \tfrac{1}{p}\right)
  \sup_{0 \leq s \leq t} \left( s^{1-\frac{1}{p}} \| P_p
  f(s) \|_{L^p_{\overline{\sigma}}(\Omega)}\right)
\end{align*}
where $B(x,y)$ denotes by Euler's beta function.
Here we have used Proposition~\ref{prop:LpLqsmoothing} with
$\theta_1 = \frac{1}{2} - \frac{1}{2p}$, $\theta_2 =
\frac{1}{p}$ to estimate $\| e^{t \cL_p} v_0 \|_{H^{1 +
\frac{1}{p}, p}}$ and $\theta_1 = \frac{1}{2} +
\frac{1}{2p}$, $\theta_2 = 0$ to estimate
$\| e^{(t-s) \cL_p} \|_{\cL(\lpso,
H_{\overline{\sigma}}^{1+\frac{1}{p}, p})}$.
By assumption one can confirm that $k_1(t) \leq k_\flat$
for any small $k_\flat > 0$, if $t$ is taken sufficiently
small.
Similarly, for $m \geq 2$ we have
\begin{align*}
  k_{m+1}(t) \leq k_1(t) + C \sup_{0 \leq s \leq t}
  \left( s^{1-\gamma} \| v_m(s) \|_{H^{1+\frac{1}{p}, p}}
  \right)^2 + C t^{1-\gamma} \sup_{0 \leq s \leq t}
  s^{1-\gamma} \| v_m(s) \|_{H^{1+\frac{1}{p}, p}} 
\end{align*}
with some constant $C > 0$.
We now obtain that
\begin{align*}
  k_{m+1} \leq k_\flat + C \left( k_m(t)^2 + t^{1-\gamma}
  k_m(t) \right).
\end{align*}
Therefore, $k_m(t) \leq 2 k_\flat$ for all $m \in \N$, if
we choose $t$ small enough so that $C t^{1-\gamma} \leq
\frac{1}{4}$ and $k_\flat \leq \frac{1}{8C}$.
Inductively we may check $\lim_{t \to 0+} k_m(t) = 0$.
The other properties 
such as uniqueness
can be
shown by minor adjustments
as in \cite[Section 5]{HieberKashiwabara2015}.
\end{proof}

\begin{remark}
  It is known that the Ornstein-Uhlenbeck semigroup is not
  analytic.
  So, it does not map after short time into its generators
  domain, but only into a Sobolev space.
  Therefore, it is 
  not expected 
  that the mild solution 
  is a strong one.
  However, once we guarantee 
  more
  smoothing on the semigroup, it might be possible to show that the mild
  solution $v$ satisfies \eqref{eq:V} in the classical
  sense as well as $V = v + M x_h$ to \eqref{eq:primequiv}.
\end{remark}

%%%%%%%%%%%%%%%%%%%%%%%%%%%%%%%%%%%%%%%%%%%%%%%%%%%%%%%%%%%


\begin{thebibliography}{99}

\bibitem{Arendt2004}
W.~Arendt.
\newblock Semigroups and evolution equations:
functional calculus, regularity and kernel estimates.
\newblock In {\em Handb. Differ. Equ. Evolutionary
equations. Vol. I}:1--85, 2004.
\newblock \doi{10.1016/s1874-5717(04)80003-3}

\bibitem{BabinMahalovNicolaenko2001}
A.~Babin, A.~Mahalov and B.~Nicolaenko.
\newblock 3D Navier-Stokes and Euler equations with
initial data characterized by uniformly large vorticity.
\newblock {\em Indiana Univ. Math. J.}, 50:1--35, 2001.
\newblock \doi{10.1512/iumj.2001.50.2155}

\bibitem{CampitiGaldiHieber2014}
M.~Campiti, G.~P.~Galdi and M.~Hieber.
\newblock Global existence of strong solutions for 2-dimensional 
Navier-Stokes equations on exterior domains with growing data at infinity.
\newblock {\em Comm. on Pure and Applied Analysis}, 13(4):1613--1627, 2014.
\newblock \doi{10.3934/cpaa.2014.13.1613}

\bibitem{CaoTiti2007}
Ch.~Cao and E.~Titi.
\newblock Global well-posedness of the three-dimensional
viscous primitive equations of large scale ocean and
atmosphere dynamics.
\newblock {\em Annals of Mathematics}, 166:245--267, 2007.
\newblock \doi{10.4007/annals.2007.166.245}

\bibitem{DoreVenni1987}
G.~Dore and A.~Venni.
\newblock On the closedness of the sum of two closed
operators.
\newblock {\em Math. Z.}, 196(2):189--201, 1987.
\newblock \doi{10.1007/BF01163654}

\bibitem{FujitaKato1964}
H.~Fujita and T.~Kato.
\newblock On the Navier-Stokes initial value problem. I.
\newblock {\em Arch. Rational Mech. Anal.},
16:269--315, 1964.
\newblock \doi{10.1007/BF00276188}

\bibitem{GaldiHieberKashiwabara2015}
G.~P.~Galdi, M.~Hieber and T.~Kashiwabara.
\newblock Strong time-periodic solutions to the 3D
primitive equations subject to arbitrary large forces.
\newblock Preprint, \href{https://arxiv.org/abs/1509.02637}{arXiv:1509.02637v1}, 2015.

\bibitem{GallayMaekawa2011}
Th.~Gallay and Y.~Maekawa.
\newblock Three-dimensional stability of Burgers vortices.
\newblock {\em Comm. Math. Phys.}, 302(2):477--511, 2011.
\newblock \doi{10.1007/s00220-010-1132-6}

\bibitem{GigaGriesHusseinHieberKashiwabara2016}
Y.~Giga, M.~Gries, A.~Hussein, M.~Hieber and
T.~Kashiwabara.
\newblock Bounded $H^{\infty}$-calculus for the hydrostatic
Stokes operator on $L^p$-spaces and applications.
\newblock To appear in Proc. Am. Math. Soc.


\bibitem{HallerWiedl2005}
R.~Haller-Dintelmann and J.~Wiedl.
\newblock Kolmogorov kernel estimates for the
Ornstein-Uhlenbeck operator.
\newblock {\em Ann. Sc. Norm. Super. Pisa Cl. Sci. (5)},
4(4):729--748, 2005.
\newblock \url{http://eudml.org/doc/84578}  
  
  
\bibitem{HanShaoWangXu}
B.~Han, S.~Shao, S.~Wang, W.-Q.~Xu.
\newblock Global existence for the 2D Navier-Stokes flow in
the exterior of a moving or rotating obstacle.
\newblock {\em Kinetic and Related Models}, 9(4): 767--776, 2016.  
\newblock \doi{10.3934/krm.2016015} 
  
\bibitem{HieberKashiwabara2015}
M.~Hieber and T.~Kashiwabara.
\newblock Global strong well-posedness of the three
dimensional primitive equations in $L^p$-spaces.
\newblock {\em Arch. Rational Mech. Anal.}, 2016.
\newblock \doi{10.1007/s00205-016-0979-x}
 
\bibitem{HieberHusseinKashiwabara2016}
M.~Hieber, T.~Kashiwabara and A.~Hussein.
\newblock Global strong $L^p$ well-posedness of the 3D
primitive equations with heat and salinity diffusion.
\newblock {\em J. Differential Equations},
261(12):6950--6981, 2016.
\newblock \doi{10.1016/j.jde.2016.09.010}

\bibitem{HieberRhandiSawada2007}
M.~Hieber, A.~Rhandi and O.~Sawada.
\newblock The Navier-Stokes flow for globally Lipschitz
continuous initial data.
\newblock {\em RIMS K{\^{o}}ky{\^{u}}roku Bessatsu},
B1:159--165, 2007. 
%no doi found

\bibitem{HieberSawada2005}
M.~Hieber and O.~Sawada.
\newblock The Navier-Stokes equations in $\R^n$ with
linearly growing initial data.
\newblock {\em Arch. Ration. Mech. Anal.},
175(2):269--285, 2005.
\newblock \doi{10.1007/s00205-004-0347-0}


\bibitem{Hishida1999}
T.~Hishida.
\newblock An existence theorem for the Navier-Stokes
flow in the exterior of a rotating obstacle.
\newblock {\em Arch. Ration. Mech. Anal.},
150(4):307--348, 1999.
\newblock \doi{10.1007/s002050050190}

\bibitem{Kato1984}
T.~Kato.
\newblock Strong $L^p$-solutions of the Navier-Stokes
equation in $\R^m$, with applications to weak solutions.
\newblock {\em Math. Z.}, 187(4):471--480, 1984.
\newblock \url{http://eudml.org/doc/173504}
 
\bibitem{LiTiti2016}
J.~Li and E.~Titi.
\newblock Recent advances concerning certain class of
geophysical flows.
\newblock Preprint \href{https://arxiv.org/abs/1604.01695}{arXiv:1604.01695}, 2016.



\bibitem{Lionsetal1992}
J.~L.~Lions, R.~Temam and Sh.~H.~Wang.
\newblock New formulations of the primitive equations of
atmosphere and applications.
\newblock {\em Nonlinearity}, 5(2):237--288, 1992.
\newblock \doi{10.1088/0951-7715/5/2/001}

\bibitem{Lionsetal1992_b}
J.~L.~Lions, R.~Temam and Sh.~H.~Wang.
\newblock On the equations of the large-scale ocean.
\newblock {\em Nonlinearity}, 5(5):1007--1053, 1992.
\newblock \doi{10.1088/0951-7715/5/5/002}

\bibitem{Lionsetal1993}
J.~L.~Lions, R.~Temam and Sh.~H.~Wang.
\newblock Models for the coupled atmosphere and ocean.
({CAO} {I},{II}).
\newblock {\em Comput. Mech. Adv.}, 1:3--119, 1993.
%no doi found


\bibitem{Majda1986}
A.~Majda.
\newblock Vorticity and the mathematical theory of
incompressible fluid flow.
\newblock {\em Comm. Pure Appl. Math.},
39(S):S187--S220, 1986.
\newblock \doi{10.1002/cpa.3160390711}

\bibitem{Majda2003}
A.~Majda.
\newblock {\em Introduction to PDEs and Waves for the
Atmosphere and Ocean}.
(Courant Lecture Notes in Mathematics vol 9).
\newblock Providence, RI:
American Mathematical Society, 2003.

\bibitem{Metafune2001}
G.~Metafune.
\newblock $L^p$-spectrum of Ornstein-Uhlenbeck operators.
\newblock {\em Ann. Scuola Norm. Sup. Pisa Cl. Sci.},
30(1):97--124, 2001.
\newblock \url{http://eudml.org/doc/84440}

\bibitem{MetafunePruessetall2002}
G.~Metafune, J.~Pr{\"u}ss, A.~Rhandi and R.~Schnaubelt.
\newblock The domain of the Ornstein-Uhlenbeck operator on
an $L^p$-space with invariant measure.
\newblock {\em Ann. Sc. Norm. Super. Pisa Cl. Sci.},
1(2):471--485, 2002.
\newblock \url{http://eudml.org/doc/84478}
%no doi found


\bibitem{Pazy}
A.~Pazy.
\newblock {\em Semigroups of Linear Operators and
Applications to Partial Differential Equations.}
\newblock Springer, New York, 1983.
\newblock \doi{10.1007/978-1-4612-5561-1}  
    


\bibitem{Pedlosky1987}
J.~Pedlosky.
\newblock {\em Geophysical Fluid Dynamics}. Second Edition.
\newblock Springer, New York, 1987.
\newblock \doi{10.1007/978-1-4612-4650-3}    

\bibitem{Triebel}
H.~Triebel.
\newblock {\em Theory of Function Spaces}.
\newblock (Reprint of 1983 edition)
Springer AG, Basel, 2010.
\newblock \doi{10.1007/978-3-0346-0416-1}
 
\bibitem{Trotter1959}
H.~F.~Trotter.
\newblock On the product of semi-groups of operators.
\newblock {\em Proc. Amer. Math. Soc.}, 10:545--551, 1959.
\newblock \doi{10.2307/2033649}

\bibitem{Vallis2006}
G.~K.~Vallis.
\newblock {\em Atmospheric and Oceanic Fluid Dynamics}.
Second Edition.
\newblock Cambridge Univ. Press, 2006.
%no doi found

\bibitem{WashingtonParkinson1986}
W.~M.~Washington and C.~L.~Parkinson.
\newblock {\em An Introduction to Three Dimensional Climate
Modeling}. Second Edition.
%no doi found

\bibitem{Wiedl2006}
J.~Wiedl.
\newblock Analysis of Ornstein-Uhlenbeck operators.
\newblock PhD thesis, TU Darmstadt, 2007.
%no doi found

\end{thebibliography}
\end{document}